\crefname{hypothesis}{Hypothesis}{Hypotheses}
\Crefname{ALC@unique}{Line}{Lines}
\colorlet{texcscolor}{blue!50!black}
\colorlet{texemcolor}{red!70!black}
\colorlet{texpreamble}{red!70!black}
\colorlet{codebackground}{black!25!white!25}
\lstdefinestyle{siamlatex}{%
	style=tcblatex,
	texcsstyle=*\color{texcscolor},
	texcsstyle=[2]\color{texemcolor},
	keywordstyle=[2]\color{texemcolor},
	moretexcs={cref,Cref,maketitle,mathcal,text,headers,email,url},
}
\DeclareTotalTCBox{\code}{ v O{} }
{ 
	fontupper=\ttfamily\color{black},
	nobeforeafter,
	tcbox raise base,
	colback=codebackground,colframe=white,
	top=0pt,bottom=0pt,left=0mm,right=0mm,
	leftrule=0pt,rightrule=0pt,toprule=0mm,bottomrule=0mm,
	boxsep=0.5mm,
	#2}{#1}
\patchcmd\newpage{\vfil}{}{}{}
\newcommand{\nn}{\nonumber}
\newtheorem{coro}[theorem]{Corollary}
\theoremstyle{definition}
\numberwithin{equation}{section}
\newcommand{\abs}[1]{\left\lvert#1\right\rvert}
\newcommand{\diff}[2]{\dfrac{\partial #1}{\partial #2}}
\newcommand{\Lr}[1]{\left(\,#1\,\right)}
\newcommand{\mc}[1]{\mathcal #1}
\newcommand{\mb}[1]{\mathbb #1}
\newcommand{\nm}[2]{\left\|\,#1\,\right\|_{#2}}
\newcommand{\wt}[1]{\widetilde{#1}}
\newcommand{\wh}[1]{\widehat{#1}}
\renewcommand{\set}[2]{\{\,#1\,\mid\,#2\,\}}
\newcommand{\dual}[2]{\left\langle #1,#2\right\rangle}
\def\negint{{\int\negthickspace\negthickspace\negthickspace
		\negthinspace -}}
\def\al{\alpha}
\def\del{\delta}
\def\eps{\varepsilon}
\def\xe{(x/\varepsilon)}
\def\xxe{\Lr{\dfrac{x}\eps}}
\def\na{\nabla}
\def\pa{\partial}
\def\lam{\lambda}
\def\Lam{\Lambda}
\def\Om{\Omega}
\def\R{\mb{R}}
\def\dx{\,\mathrm{d}x}
\def\dy{\,\mathrm{d}y}
\def\x{\times}
\def\divop{\operatorname{div}}
\title{Error Estimate of Multiscale Finite Element Method for Periodic Media Revisited
\thanks{Submitted to the editors \today.
\funding{The work of Ming was supported by the National Natural Science Foundation of China under the grants 11971467 and 12371438.}}
}%
\author{Pingbing Ming\thanks{LSEC, Institute of Computational Mathematics and Scientific/Engineering Computing, AMSS,
			Chinese Academy of Sciences, No. 55, East Road Zhong-Guan-Cun, Beijing 100190, China
			and School of Mathematical Sciences, University of Chinese Academy of Sciences, Beijing 100049, China (\email{mpb@lsec.cc.ac.cn},\email{songsq@lsec.cc.ac.cn}).}
		\and Siqi Song\footnotemark[2]}
\begin{document}
	\maketitle
\begin{tcbverbatimwrite}{tmp_\jobname_abstract.tex}
\begin{abstract}
We derive the optimal energy error estimate for multiscale finite element method with oversampling technique applying to elliptic systems with rapidly oscillating periodic coefficients that are bounded measurable, which may admit rough microstructures. As a by-product of the energy error estimate, we derive the rate of convergence in 
L$^{d/(d-1)}-$norm with $d$ the dimensionality.
\end{abstract}

\begin{keywords}
Multiscale finite element method, homogenization, error estimate, oversampling
\end{keywords}
\begin{AMS}
35J15, 65N12, 65N30
\end{AMS}
\end{tcbverbatimwrite}
\input{tmp_\jobname_abstract.tex}

%
\section{Introduction}
The multiscale finite element method (MsFEM) introduced by Hou an Wu~\cite{HouWu:1997} aims for solving the boundary value problems with rapidly oscillating coefficients without resolving the fine scale information. The main idea is to exploit the multiscale basis functions that capture the fine scale information of the underlying partial differential equations. MsFEM has been successfully applied to many problems such as two phase flows, nonlinear homogenization problems, convection-diffusion problems,  elliptic interface problems with high-contrast coefficients and Poisson problem with rough and oscillating boundary, we refer to book~\cite{EfendievHou:2009} for a survey of MsFEM before 2009.  More recent efforts for MsFEM focus on extending the method to deal with more general media; cf.,~\cite{ChungEfendiev:2014, Chen:2021, Chen:2022}. We also refer 
to~\cite{Owhadi:2017, Owhadi:2019, peterseim:2021,ChenChenLi:2022} for a summary of recent progress for related 
methods.

In~\cite{HouWuCai:1999} and~\cite{EfendievHouWu:2000}, the authors proved MsFEM converges for the scalar elliptic boundary value problem in two dimension with periodic oscillating coefficients in the energy norm, and the convergence rate is $\sqrt{\eps}+h+\eps/h$, where $h$ is the mesh size of the triangulation, and $\eps$ is the period of the oscillation. The technical assumptions are
\begin{enumerate}
\item The coefficient matrix of the elliptical problem is symmetric, and each entry is a $C^1$ function;

\item The homogenized solution $u_0\in W^{1,\infty}(\Om)\cap H^2(\Om)$;

\item The corrector $\chi$ defined in~\eqref{eq:corrector} belong to $W^{1,\infty}$.
\end{enumerate}
The first assumption excludes the rough microstructures, which frequently appears in the realistic materials~\cite{Torquato:2002}; The second assumption is standard except that $u_0\in W^{1,\infty}(\Om)$, which may not be true even for Poisson equation posed on a ball~\cite{Cianchi:1990}. The last assumption on the corrector is not realistic at all, though it may be true for certain special microstructures such as laminates~\cite{ChipotKinderlehrer:1986} and for problems with piecewise H\"{o}lder continuous coefficients~\cite{LiVogelius:2000,LiNirenberg:2003}; We refer to~\cite{DuMing:2010} for an elaboration on this assumption.

Nevertheless, there are some subsequent endeavor on proving the error estimates for MsFEM under weaker assumptions; see, e.g.,~\cite{ChenHou:2003,SarkisVersieux:2008,ChenWu:2010, YeDongCui:2020}, just name a few, most of them concern the second assumption, while it is still unknown whether the above assumptions may be removed or to what degree they may be weakened. Moreover, though MsFEM has been successfully applied to elliptic systems~\cite{EfendievHou:2009,ChungEfendiev:2014}, while it does not seem easy to extend the proof to elliptic systems because the maximum principle has been exploited, which may be invalid for elliptic systems~\cite{KresinMazya:2012}. 

The present work gives an affirmative answer to the above questions. Assuming that $u_0\in W^{2,d}$ with the dimensionality $d=2,3$, we prove the optimal energy error estimate of MsFEM with/without oversampling for elliptic systems with bounded, measurable and symmetric periodical coefficients; cf. Theorem~\ref{thm:main} and Theorem~\ref{thm:mainorig}. The symmetry assumption may be dropped for MsFEM without oversampling, or for MsFEM with oversampling applying to the elliptic scalar problem. This means that MsFEM achieves optimal convergence rate for problems with rough microstructures. 

As an application of the energy error estimate, we derive improved error estimate of MsFEM in $L^{d/(d-1)}-$norm by resorting to the Aubin-Nitsche dual argument~\cite{Aubin:1967, Nitsche:1968}, naturally, this gives the $L^2-$error estimates for two-dimensional problem and the elliptic scalar problem in three dimension. Such estimate would be useful for analyzing MsFEM applying to the eigenvalue problems in composites~\cite{Kesavan:1979}.

There are two ingredients in our proof. The one is a local version of the multiplier estimates for periodic homogenization of elliptic systems~\cite{Zhikov:2005, Shen:2018}; see Lemma~\ref{lemma:Mulplier}, which helps us to remove the boundedness assumption on the gradient of the corrector. Another one is a local estimate of the gradient of the first order approximation of the solution; see Lemma~\ref{lemma:local1st}, which bypasses the maximum principle in the proof, hence we may derive the error estimate for elliptic systems.

The remaining part of the paper is as follows. We formulate MsFEM with oversampling in \S~\ref{sec:MsFEM}. In \S~\ref{sec:homogenization}, we recall some quantitative estimates of the periodic homogenization for elliptic systems. The energy error estimate will be given in \S~\ref{sec:error}, from which we prove the error estimates in $L^{d/d-1}$ norm.  As a direct consequence of these estimates, we prove the error estimates for MsFEM without oversampling. In the last section, we summarize our results and discuss certain extensions.

Throughout this paper,  $C$ is a generic constant that may be different at different occurrence, while it is independent of the mesh size $h$ and the small parameter $\eps$.
\section{Multiscale Finite Element Method with Oversampling}\label{sec:MsFEM}
We firstly fix some notations. Let $\Om$ be a bounded Lipschitz domain in $\mathbb{R}^d$ (we focus on $d=2,3$). The standard Sobolev space $W^{k,p}(\Om)$ will be used~\cite{AdamsFournier:2003}, which is equipped with the norm $\nm{\cdot}{W^{k,p}(\Om)}$. We use the convention $H^k(\Om)=W^{k,2}(\Om)$. We denote by $W^{k,p}(\Om;\R^m)$ the vector-valued function with each component belonging to $W^{k,p}(\Om)$, and define $\abs{D}{:}=\text{mes} D$ for any measurable set $D$. 

We consider the second order elliptic system in divergence form
\[
\mc{L}_{\eps}=-\divop\Lr{A\xe\na}
\]
with the coefficient $A$ given by
\[
A(y)=a_{ij}^{\al\beta}(y)\qquad i,j=1,\cdots,d\text{\;and\;}\al,\beta=1,\cdots,m.
\]
For $u=(u^1,\cdots,u^m)$,
\[
\Lr{\mc{L}_{\eps}(u)}^{\al}{:}=-\diff{}{x_i}\Lr{a_{ij}^{\al\beta}\xxe\diff{u^{\beta}}{x_j}}\qquad\al=1,\cdots,m.
\]
We always assume that $A$ is bounded measurable and satisfies the Legendre-Hadamard condition as
\begin{equation}\label{eq:ellp}
	\lam\abs{\xi}^2\abs{\eta}^2\le a_{ij}^{\al\beta}(y)\xi_i\xi_j\eta_{\al}\eta_{\beta}\le\Lam\abs{\xi}^2\abs{\eta}^2
	\qquad\text{for a.e.\;}y\in\mb{R}^d,
\end{equation}
where $\xi=(\xi_1,\cdots,\xi_d)$ and $\eta=(\eta_1,\cdots,\eta_m)$. The transpose of $A$ is understood as $A^t(y)=a_{ji}^{\beta\al}(y)$. We assume that $A$ is $1-$periodic; i.e., for all $z\in\mb{Z}^d$,
\[
A(y+z)=A(y)\qquad\text{for a.e.\;}y\in\mb{R}^d.
\]

Considering the following homogeneous boundary value problem: Given $f\in H^{-1}(\Om;\mb{R}^m)$, we find $u^\eps
\in H_0^1(\Om;\R^m)$ satisfying
\begin{equation}\label{eq:bvp}
\mc{L}_{\eps}(u^{\eps})=f\quad\text{in\quad}\Om\qquad\text{and\quad}u^{\eps}=0\quad\text{on\quad}\pa\Om
\end{equation}
in the sense of distribution. The corresponding variational problem reads as: Find $u^\eps\in H_0^1(\Omega;\mathbb{R}^m)$ such that
\begin{equation}\label{eq:bvpweak}
a_{{\Om}}(u^\eps,v)=\dual{f}{v}_{\Om}\quad \text{for all~} v\in H_0^1(\Omega;\mathbb{R}^m),
\end{equation}
where for any measurable subset $\wt{\Om}$ of $\Omega$, 
\[
a_{\wt{\Om}}(u,v){:}=\int_{\wt{\Om}}\na v\cdot A(x/\eps)\na u\dx\qquad\text{and\quad}\dual{f}{v}_{\wt{\Om}}=\int_{\wt{\Om}} f(x)\cdot v(x)\dx.
\]
We shall drop the subscript when the subset is the whole domain $\Om$.

$\Omega$ is triangulated by $\mc{T}_h$ that consists of simplices $\tau$ with $h_{\tau}$ its diameter and $h=\max_{\tau\in\mc{T}_h}h_{\tau}$. We assume that $\mc{T}_h$ is shape-regular in the sense of Ciarlet-Raviart~\cite{Ciarlet:1978}: there exists a chunkiness parameter $\sigma_0$ such that \(h_{\tau}/\rho_\tau\le\sigma_0\),
where $\rho_{\tau}$ is the diameter of the largest ball inscribed into $\tau$. We also assume that $\mc{T}_h$ satisfies the inverse assumption: there exists $\sigma_1>0$ such that \(h/h_{\tau}\le\sigma_1\).

For each element $\tau$, we firstly choose an oversampling domain $S=S(\tau)\supset\tau$,  which is also a simplex. Let $\lam_i$ be the $i$th barycentric coordinate of the simplex $S$ and $e^{\beta}=(0,\cdots,1\cdots,0)$ with $1$ in the
$\beta$th position. Denote $Q\in\R^{(d+1)\x m}$ with $Q_i^{\beta}=\lam_ie^\beta$ for $i=1,\cdots,d+1$ and $\beta=1,\cdots,m$, we find $\psi_i^\beta-Q_i^{\beta}\in H_0^1(S;\mb{R}^m)$ such that
\begin{equation}\label{eq:overpro}
	a_S(\psi_i^\beta,\varphi)=0\quad \text{for all\quad}\varphi\in H_0^1(S;\mb{R}^m).
\end{equation}

Next, the basis function  $\phi_i^{\beta}$ associated with the node $x_i$ of $\tau$ is defined as
\begin{equation}\label{eq:basis}
	\phi_i^{\beta}= c_{ij}^\beta\psi_j^{\beta}\qquad i=1,\cdots, d+1\quad\text{and\;}\beta=1,\cdots,m,
\end{equation}
where the coefficients $c_{ij}^\beta$ are determined by $c_{ik}^\beta Q_k^{\beta}(x_j)=\delta_{ij}e^{\beta}$ for any node
$x_j$ of $\tau$. The matrix $c^\beta=(c_{ij}^\beta)$ is invertible because $\{\psi_i^\beta\}_{i=1}^{d+1}$ are linear independent over $S$. For $\phi_i=(\phi_i^1,\phi_i^2,\cdots,\phi_i^m)$, the multiscale finite element space is defined by
\[
V_h:=\text{Span}\{\phi_i \quad\text{for all nodes\;} x_i \text{\;of\;}\mc{T}_h\}.
\]
Note that $V_h\subsetneq H^1(\Om;\R^m)$ because the functions in $V_h$ may not be continuous across the element boundary. The bilinear form $a_h$ is defined for any $v,w\in V_h$ in a piecewise manner as 
\(
a_h(v,w):=\sum_{\tau\in\mc{T}_h} a_{\tau}(v,w).
\)
The approximation problem reads as: Find $u_h\in V_h^0$ such that
\begin{equation}\label{eq:overMsFEM}
a_h(u_h,v)=\dual{f}{v}\qquad\text{for all\quad}v\in V_h^0,
\end{equation}
where
\(
V_h^0{:}=\{v\in V_h|\  \text{the degrees of freedom of the nodes on~}\pa \Omega\text{~are zero}\}\). It follows from~\cite[Appendix B]{EfendievHouWu:2000} that 
\begin{equation}\label{eq:norm}
\nm{v}{h}:=\Lr{\sum_{\tau\in\mc{T}_h}\|\na v\|_{L^2(\tau)}^2}^{1/2}
\end{equation}
is a norm over $V_h^0$. 
\begin{remark}
The authors in~\cite{HenningPeterseim:2013} introduced a new MsFEM that allows for the oversampling domain of more general shape, e.g. an element star, which facilitates the implementation of MsFEM, while it is equivalent to the original version~\cite{EfendievHouWu:2000} if the oversampling domain is a simplex.
\end{remark}
\section{Quantitative Estimates for Periodic Homogenization of Elliptic System}\label{sec:homogenization}
By the theory of H-convergence~\cite{MuratTartar:1997}, the solution $u^\eps$ of~\eqref{eq:bvp} converges weakly to the homogenized solution $u_0$ in $H^1(\Omega;\mb{R}^m)$ as $\eps\to 0$, and $u_0$ satisfies
\begin{equation}\label{eq:bvp0}
\mc{L}_0(u_0)=f\quad\text{in\quad}\Om,\qquad u_0=0\quad\text{on\quad}\pa\Om,
\end{equation}
where
\(
\mc{L}_0=\divop(\wh{A}\na)
\)
with the homogenized coefficients $\wh{A}=\wh{a}_{ij}^{\al\beta}$ given by
\[
\wh{a}_{ij}^{\al\beta}=\negint_Y\Lr{a_{ij}^{\al\beta}(y)+a_{ik}^{\al\gamma}\diff{\chi_j^{\gamma\beta}}{y_k}}\dy,
\]
where the unit cell $Y{:}=[0,1)^d$, and the corrector $\chi(y)=\Lr{\chi_j^{\beta}(y)}=\Lr{\chi_j^{\al\beta}}$ for $j=1,\cdots,d$ and $\al,\beta=1,\cdots,m$ satisfies the following cell problem: Find $\chi_j^{\beta}\in H^1_{\text{per}}(Y;\mb{R}^m)$ such that $\int_Y\chi_j^{\beta}\dy=0$ and
\begin{equation}\label{eq:corrector}
a_Y(\chi_j^{\beta},\psi)=-a_Y(P_j^{\beta},\psi)\qquad\text{for all\quad}\psi\in H^1_{\text{per}}(Y;\mb{R}^m),
\end{equation}
where $P_j^{\beta}=y_j e^{\beta}$, and for all $\phi,\psi\in H^1_{\text{per}}(Y;\mb{R}^m)$,
\[
a_Y(\phi,\psi){:}=\int_Ya_{ij}^{\al\beta}(y)\diff{\phi^{\beta}}{y_j}\diff{\psi^{\al}}{y_i}\dy.
\]

The existence and uniqueness of the solution of~\eqref{eq:corrector} follows from the ellipticity of $A$ and the Lax-Milgram theorem. Moreover, 
\[
\nm{\na\chi_j^\beta}{L^2(Y)}\le\Lam/\lam\qquad\text{and}\quad\nm{\chi_j^{\beta}}{H^1(Y)}\le C_p\Lam/\lam,
\]
where $C_p$ is the constant arising from Poincar\'e's inequality:
\[
\nm{\psi}{H^1(Y)}\le C_p\nm{\na\psi}{L^2(Y)}\quad\text{for all\quad}\psi\in H^1_{\text{per}}(Y)\quad\text{and}\quad\int_Y\psi\dy=0.
\]

By Meyers' regularity result~\cite{Meyers:1963, Meyers:1975}, there exists $p>2$ such that
\begin{equation}\label{eq:meyers}
\nm{\na\chi_j^\beta}{L^p(Y)}\le C,
\end{equation}
where the index $p$ and the constant $C$ depending only on $\lam$ and $\Lam$. This inequality implies that $\chi$ is H\"older continuous when $d=2$ by the Sobolev embedding theorem~\cite{AdamsFournier:2003}. By the De Giorgi-Nash theorem, $\chi$ is also H\"older continuous when $d=3$ and $m=1$. Hence, for $m=1,d=2,3$ and $m\ge 2, d=2$, there exists $C$ depending only on $\lam$ and $\Lam$ such that
\begin{equation}\label{eq:maxbd}
\nm{\chi_j^{\beta}}{L^\infty(Y)}\le C.
\end{equation}

In case of $d=3$ and $m\ge 2$, we only have
\begin{equation}\label{eq:corr-reg}
\nm{\chi_j^\beta}{L^q(Y)}\le C\qquad\text{for certain\quad}q\ge 6,
\end{equation}
which is a direct consequence of~\eqref{eq:meyers} and the Sobolev embedding theorem~\cite{AdamsFournier:2003}.

Another frequently used estimate for the corrector matrix is: For any measurable set $D$, and for $1\le p\le\infty$, there exists $C$ depends on $d$ and $p$ such that
\begin{equation}\label{eq:corr-est}
\nm{\chi(x/\eps)}{L^p(D)}\le C\abs{D}^{1/p}\nm{\chi}{L^p(Y)}.
\end{equation}%

Given the corrector $\chi$, the first order approximation of $u^\eps$ is defined by
\begin{equation}\label{eq:def1stapp}
u_1^{\eps}(x){:}=u_0(x)+\eps\chi\xe\na u_0(x).
\end{equation}

We summarize the convergence rate of $u_1^\eps$ in the following theorem. 
\begin{theorem}\label{thm:rate}
Assume that $A$ is $1-$periodic and satisfies~\eqref{eq:ellp}. Let $\Om$ be a bounded Lipschitz domain in $\mb{R}^d$. Let $u^{\eps}$ and $u_0$ be the weak solutions of~\eqref{eq:bvp} and ~\eqref{eq:bvp0}, respectively. 
\begin{enumerate}
\item If $u_0\in W^{2,d}(\Om;\mb{R}^m)$, then
\begin{equation}\label{eq:1stratea}
\nm{u^{\eps}-u_1^{\eps}}{H^1(\Om)}\le C\sqrt{\eps}\nm{\na u_0}{W^{1,d}(\Om)},
\end{equation}
where $C$ depends on $\lam,\Lam$ and $\Om$.

\item If the corrector $\chi$ is bounded and $u_0\in H^2(\Om;\mb{R}^m)$, then
\begin{equation}\label{eq:1strateb}
	\nm{u^{\eps}-u_1^{\eps}}{H^1(\Om)}\le C\sqrt{\eps}\nm{\na u_0}{H^1(\Om)},
\end{equation}
where $C$ depends $\lam,\Lam,\nm{\chi}{L^\infty}$ and $\Om$.\end{enumerate}
\end{theorem}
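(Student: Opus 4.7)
The plan is to work with a boundary-corrected first-order approximation that lies in $H_0^1(\Om;\R^m)$ and then apply the coercivity of $a_\Om$. Let $\theta_\eps\in C_c^\infty(\Om)$ be a standard cut-off with $\theta_\eps\equiv 1$ on $\set{x\in\Om}{\mathrm{dist}(x,\pa\Om)>3\eps}$, $\mathrm{supp}\,\theta_\eps$ contained in an $\eps$-neighborhood thereof, and $\abs{\na\theta_\eps}\le C/\eps$, and let $K_\eps$ denote a Steklov-type smoothing operator at scale $\eps$. Set
\[
r^\eps(x){:}=u^\eps(x)-u_0(x)-\eps\,\chi(x/\eps)\,K_\eps(\theta_\eps\na u_0)(x)\in H_0^1(\Om;\R^m),
\]
the cut-off ensuring that the corrector contribution vanishes on $\pa\Om$.

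Next I derive the equation for $r^\eps$ by combining~\eqref{eq:corrector} with the periodic flux corrector $\phi=(\phi_{jik}^{\al\gamma})$, antisymmetric in $(j,i)$ and satisfying $\pa_j\phi_{jik}^{\al\gamma}=b_{ik}^{\al\gamma}-\wh{a}_{ik}^{\al\gamma}$ with $b_{ik}^{\al\gamma}{:}=a_{ik}^{\al\gamma}+a_{ij}^{\al\beta}\pa_{y_j}\chi_k^{\beta\gamma}$; Meyers' inequality~\eqref{eq:meyers} then transfers to $\phi$. After integration by parts, for every $\varphi\in H_0^1(\Om;\R^m)$,
\[
a_\Om(r^\eps,\varphi)=J_1+J_2+J_3,
\]
where $J_1=\int_\Om[\wh{A}-A(x/\eps)](\na u_0-K_\eps(\theta_\eps\na u_0))\cdot\na\varphi\dx$ is the boundary-layer/commutator piece, $J_2=\eps\int_\Om\phi(x/\eps)\,\na K_\eps(\theta_\eps\na u_0)\cdot\na\varphi\dx$ is the flux-corrector contribution, and $J_3=\eps\int_\Om A(x/\eps)\chi(x/\eps)\,\na K_\eps(\theta_\eps\na u_0)\cdot\na\varphi\dx$ is the multiplier-type term. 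Setting $\varphi=r^\eps$ and invoking~\eqref{eq:ellp} reduces matters to bounding $J_1,J_2,J_3$ separately.

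For $J_2$, Meyers' bound on $\phi$ together with the $L^2$-stability of $K_\eps$ gives $\abs{J_2}\le C\eps\nm{\na u_0}{H^1}\nm{\na r^\eps}{L^2}$. The decisive step for part~(1) is $J_3$, which is controlled by the Zhikov--Shen multiplier estimate~\cite{Zhikov:2005,Shen:2018},
\[
\nm{\chi(x/\eps)\,K_\eps g}{L^2(\Om)}\le C\nm{g}{L^2(\Om)},
\]
exploiting the periodicity of $\chi$ and the $\eps$-scale averaging of $K_\eps$ to entirely bypass any $L^\infty$ control of $\chi$; this yields $\abs{J_3}\le C\eps\nm{\na u_0}{H^1}\nm{\na r^\eps}{L^2}$. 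For $J_1$ one uses the boundary-layer inequality $\nm{g}{L^2(S_\eps)}\le C\eps^{1/2}\nm{g}{W^{1,d}(\Om)}$, valid for $d=2,3$, together with the $O(\eps)$ commutator estimates for $K_\eps$ applied to $g=\na u_0$, yielding $\abs{J_1}\le C\sqrt{\eps}\nm{\na u_0}{W^{1,d}}\nm{\na r^\eps}{L^2}$. Combining delivers $\nm{\na r^\eps}{L^2}\le C\sqrt{\eps}\nm{\na u_0}{W^{1,d}}$; adding back $\eps\chi(x/\eps)(\na u_0-K_\eps(\theta_\eps\na u_0))$ -- whose $H^1$ norm is again bounded through the multiplier estimate -- then yields~\eqref{eq:1stratea}.

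Part~(2) proceeds along identical lines with major simplifications: since $\chi\in L^\infty$ the smoothing $K_\eps$ is unnecessary, one works directly with $u^\eps-u_0-\eps\chi(x/\eps)\theta_\eps\na u_0$, $J_3$ is handled by the trivial $L^\infty$ bound on $\chi$, and the boundary-layer contribution reduces to the classical thin-tube estimate $\nm{\na u_0}{L^2(S_\eps)}\le C\sqrt{\eps}\nm{\na u_0}{H^1}$, producing~\eqref{eq:1strateb}. The most delicate point I anticipate is the simultaneous treatment of the $C/\eps$ factor from $\na\theta_\eps$ and the $1/\eps$ factor from differentiating $\chi(x/\eps)$ -- both of which live on an $\eps$-thin layer -- while only $W^{1,d}$ control of $\na u_0$ is available in part~(1); closing this coupling without losing the $\sqrt{\eps}$ rate forces a careful matching between the cut-off width, the smoothing scale, and the multiplier inequality.
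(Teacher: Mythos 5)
The paper does not prove Theorem~\ref{thm:rate} itself: it cites~\cite[Theorem 3.2.7]{Shen:2018} directly. Your argument is precisely the one in that reference (cut-off near $\pa\Om$, Steklov-type $\eps$-smoothing, flux correctors, and the Zhikov--Shen periodic-multiplier estimates to avoid $L^\infty$ control of $\chi$), so it matches the source the paper relies on; the only cosmetic caveat is that your displayed bounds for $J_2$ and $J_3$ should read $C\sqrt{\eps}$ rather than $C\eps$ once the $O(\eps^{-1/2})$ contribution of $\na\theta_\eps$ on the $\eps$-strip is accounted for, a point you already flag at the end as the delicate coupling.
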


The estimates~\eqref{eq:1stratea} and~\eqref{eq:1strateb} are taken from~\cite[Theorem 3.2.7]{Shen:2018}.

We also need the following estimate in certain $L^p-$norm.
\begin{theorem}\label{thm:scale-invariant}
Under the same assumption of Theorem~\ref{thm:rate}, and assume that $A=A^t$ for $m\ge 2$. Suppose that $u_0\in W^{2,q}(\Om;\R^m)$ for $q=2d/(d+1)$. Then
\begin{equation}\label{eq:l2rate}
\nm{u^{\eps}-u_0}{L^p(\Om)}\le C\eps\nm{\na u_0}{W^{1,q}(\Om)},
\end{equation}
where $p=2d/(d-1)$ and $C$ depends only on $\lam,\Lam$ and $\Om$.
\end{theorem}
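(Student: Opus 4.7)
The plan is to use the Aubin--Nitsche duality argument. Set $q=2d/(d+1)$, the H\"older conjugate of $p=2d/(d-1)$. By $L^p$--$L^q$ duality,
\[
\nm{u^\eps-u_0}{L^p(\Om)}=\sup\Big\{\,\Big|\int_\Om(u^\eps-u_0)\cdot g\dx\Big|\,:\,g\in C_c^\infty(\Om;\R^m),\,\nm{g}{L^{q}(\Om)}\le 1\,\Big\}.
\]
For each admissible $g$, I would introduce the dual solutions $v^\eps,v_0\in H_0^1(\Om;\R^m)$ of $\mc{L}_\eps^{*}v^\eps=g$ and $\mc{L}_0^{*}v_0=g$. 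The symmetry hypothesis $A=A^{*}$ (used for $m\ge 2$) identifies the adjoint coefficient with $A$ itself, so the dual corrector coincides with $\chi$ and the homogenized matrix $\hat A$ is symmetric; combined with the $W^{2,q}$ regularity theory for constant-coefficient systems on a Lipschitz domain, this yields $\nm{v_0}{W^{2,q}(\Om)}\le C\nm{g}{L^{q}(\Om)}$.

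\smallskip

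Starting from $\mc{L}_0(u^\eps-u_0)=\mc{L}_0u^\eps-\mc{L}_\eps u^\eps=-\divop\Lr{(\hat A-A\xe)\na u^\eps}$ and using $v_0\in H_0^1(\Om;\R^m)$ as test function, a single integration by parts produces the key identity
\[
\int_\Om(u^\eps-u_0)\cdot g\dx=\int_\Om(\hat A-A\xe)\na u^\eps\cdot\na v_0\dx.
\]
I then substitute the two-scale expansion $\na u^\eps=(I+\na_y\chi\xe)\na u_0+\eps\chi\xe\na^2 u_0+\na(u^\eps-u_1^\eps)$, which splits the right-hand side into three pieces. The two ``regular'' pieces, containing $\eps\chi\xe\na^2 u_0$ and $\na(u^\eps-u_1^\eps)$, are controlled by H\"older's inequality, the corrector bounds~\eqref{eq:corr-reg}--\eqref{eq:corr-est}, Theorem~\ref{thm:rate}(i) applied on the dual side, and the critical Sobolev embedding $W^{1,q}(\Om)\hookrightarrow L^p(\Om)$; each delivers a contribution of the desired order $C\eps\nm{\na u_0}{W^{1,q}(\Om)}\nm{g}{L^{q}(\Om)}$.

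\smallskip

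The main obstacle is the remaining oscillating piece $\int_\Om\Psi\xe\na u_0\cdot\na v_0\dx$, where $\Psi(y):=(\hat A-A(y))(I+\na_y\chi(y))$ has zero mean over $Y$ by the definition of $\hat A$ but is only $O(1)$ pointwise; a naive H\"older bound misses exactly one power of $\eps$. The remedy is the flux corrector: writing $\Psi=B+\hat A\,\na_y\chi$, where $B(y):=\hat A-A(y)-A(y)\na_y\chi(y)$ is both mean-zero \emph{and} $y$-divergence-free (the latter is the corrector equation), one has $B=\divop_y E$ for a $1$-periodic skew-symmetric tensor $E=(E^{\al\beta}_{kij})$, while $\hat A\,\na_y\chi=\na_y(\hat A\chi)$ is an exact $y$-gradient. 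Rewriting $B\xe=\eps\,\divop_x[E\xe]$ and integrating by parts, the antisymmetry of $E$ in $(k,i)$ combined with the symmetry of mixed partials annihilates the second-order derivative terms, leaving only first-order derivatives of $u_0$ and $v_0$ each multiplied by the recovered factor $\eps$; the gradient part is handled analogously. The most delicate case is $d=3$, $m\ge 2$, where neither $\chi$ nor $E$ is necessarily bounded (only~\eqref{eq:corr-reg} is available) and a boundary-layer term of thickness $O(\eps)$ appears from the integration by parts; both features are accommodated by working at the critical Lebesgue exponents---which dictates the choice of $p$ and $q$ in~\eqref{eq:l2rate}---together with a thin-layer cutoff near $\pa\Om$. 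Taking the supremum over $g$ completes the proof of~\eqref{eq:l2rate}.
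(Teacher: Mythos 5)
The paper does not actually prove Theorem~\ref{thm:scale-invariant}; it cites \cite{Shen:2017} and \cite[Theorem~3.4.5]{Shen:2018} and merely notes that the present form ``may be obtained by pursuing the proof.'' Your proposal is therefore a genuine reconstruction attempt, and its overall flavor (duality with a dual solution, the flux corrector $E$ obtained from the mean-zero, divergence-free tensor $B$, skew-symmetry killing second-order derivative contributions, and a thin boundary-layer cutoff) is the right one. But there is a material gap that does not close.

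Your key identity
$\int_\Om(u^\eps-u_0)\cdot g\dx=\int_\Om\bigl(\hat A-A\xe\bigr)\na u^\eps\cdot\na v_0\dx$
is \emph{one-sided}: it contains only the homogenized dual solution $v_0$, not $v^\eps$, which you introduced but never used. When you substitute $\na u^\eps=\na u_1^\eps+\na(u^\eps-u_1^\eps)$, the residual piece
$\int_\Om\bigl(\hat A-A\xe\bigr)\na(u^\eps-u_1^\eps)\cdot\na v_0\dx$
has no structure to exploit: $\hat A-A\xe$ is merely bounded, $\nm{\na(u^\eps-u_1^\eps)}{L^2(\Om)}\le C\sqrt\eps$ is sharp because of the boundary layer (Theorem~\ref{thm:rate}), and $\nm{\na v_0}{L^2(\Om)}\le C\nm{g}{L^q(\Om)}$. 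This gives only $O(\sqrt\eps)\,\nm{g}{L^q(\Om)}$, one half-power short of the claim. Your remark about ``Theorem~\ref{thm:rate}(i) applied on the dual side'' signals that you want to use $\nm{\na(v^\eps-v_1^\eps)}{L^2}\le C\sqrt\eps$ as well, but $v^\eps$ simply does not appear in the identity you derived, so there is nothing to pair it with.

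The proof in Shen that the paper invokes is genuinely two-sided: one starts from $\int_\Om(u^\eps-u_0)\cdot g\dx=a_\eps(u^\eps-u_0,v^\eps)$, subtracts \emph{both} first-order approximations, and the quadratic boundary-layer term is
$a_\eps(u^\eps-u_1^\eps,\,v^\eps-v_1^\eps)=O(\sqrt\eps)\cdot O(\sqrt\eps)=O(\eps)$,
with the remaining cross terms handled by the flux-corrector integration by parts and the $\eps$-smoothing operator $S_\eps$ (which is also what rigorously accommodates the unbounded $\chi$, $E$ when $d=3$, $m\ge2$; a bare H\"older bound at the critical exponents does not suffice there because the product $\chi\xe\na u_0$ would require $\chi\in L^\infty$). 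Without the symmetric expansion the $\sqrt\eps$ boundary layer cannot be absorbed, so the argument as written does not establish~\eqref{eq:l2rate}.
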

This theorem was proved in~\cite{Shen:2017}; See also~\cite[Theorem 3.4.3]{Shen:2018} with
\[
\nm{u^{\eps}-u_0}{L^p(\Om)}\le C\eps\nm{u_0}{W^{2,q}(\Om)},
\]
which together with the Ponicar\'e's inequality leads to~\eqref{eq:l2rate}. Moreover, using a scaling argument, we rewrite~\eqref{eq:l2rate} as
\begin{equation}\label{eq:l2rate-scale}
\nm{u^{\eps}-u_0}{L^p(\Om)}\le C\eps\Lr{(\text{diam\;}\Om)^{-1}\nm{\na u_0}{L^q(\Om)}+\nm{\na^2 u_0}{L^q(\Om)}},
\end{equation}
where $C$ is independent of the diameter of $\Om$.

%
\section{Error Estimates for the Periodic Media}\label{sec:error}
Before stating the main result, we make an assumption on the size of the oversampling domain $S$~\cite{ChenHou:2003}.
\noindent\vskip .3cm
{\bf Assumption A:\;}There exist constants $\gamma_1$ and $\gamma_2$ independent of $h$ such that
\[
\text{\;diam\;}S\le\gamma_1 h_{\tau}\qquad\text{and\quad}\text{dist}(\pa\tau,\pa S)\ge\gamma_2 h_{\tau}.
\]
Moreover, we always assume that $h>\eps$.
\subsection{H$^1$ error estimate}
The main result of this work is 
\begin{theorem}\label{thm:main}
Assume that $A$ is $1-$periodic and satisfies the Legendre-Hadamard condition~\eqref{eq:ellp}. For $m\geq 2$, we assume $A=A^t$. Let $\Om$ be a bounded Lipschitz domain in $\mb{R}^d$, and let $u^{\eps}$ and $u_h$ be the solutions of Problems~\eqref{eq:bvpweak} and~\eqref{eq:overMsFEM}, respectively.

For $m=1,d=2,3$ or $m\ge 2,d=2$, if $u_0\in H^2(\Om;\mb{R}^m)$, then		
\begin{equation}\label{eq:H1errb}
	\nm{u^\eps-u_h}{h,\Om}\le C\Lr{\sqrt\eps+\eps/h+h}\Lr{\nm{\na u_0}{H^1(\Om)}+\nm{f}{L^2(\Om)}},
\end{equation}
where $C$ depends on $\lam,\Lam,\Om$ and the mesh parameters $\sigma_0,\sigma_1,\gamma_1,\gamma_2$.

For $m\geq2$ and $d=3$, if $u_0\in W^{2,3}(\Om;\mb{R}^m)$, then
\begin{equation}\label{eq:H1err}
 \nm{u^\eps-u_h}{h,\Om}\le C\Lr{\sqrt\eps+\eps/h+h}\Lr{\nm{\na u_0}{W^{1,3}(\Om)}+\nm{f}{L^2(\Om)}},
\end{equation}
where $C$ depends on $\lam,\Lam, \Om$ and the mesh parameters $\sigma_0,\sigma_1,\gamma_1,\gamma_2$.
\end{theorem}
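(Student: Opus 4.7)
The plan is to combine a Strang-type nonconforming estimate with the quantitative homogenization result Theorem~\ref{thm:rate} and two local ingredients: a patchwise multiplier estimate for $\eps\chi\xe$ (to dispose of the $W^{1,\infty}$ bound on the corrector) and the local gradient bound Lemma~\ref{lemma:local1st} (to avoid the maximum-principle arguments that fail for systems). Since $V_h^0\not\subset H_0^1(\Om;\mb{R}^m)$, I would first apply Strang's second lemma,
\[
\nm{u^\eps-u_h}{h,\Om}\le C\inf_{v\in V_h^0}\nm{u^\eps-v}{h,\Om}+C\sup_{w\in V_h^0\setminus\{0\}}\dfrac{\abs{a_h(u^\eps,w)-\dual{f}{w}_\Om}}{\nm{w}{h,\Om}},
\]
and then attack the approximation and consistency parts separately.

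For the approximation part, I would take $v:=\sum_i u_0(x_i)\phi_i$, the MsFEM interpolant of $u_0$, and compare it with the global first-order expansion $u_1^\eps$ and a patchwise frozen variant: on each oversampling simplex $S\supset\tau$ set $\tilde u_1^\eps|_S:=L_\tau u_0+\eps\chi\xe\na L_\tau u_0$, where $L_\tau u_0$ is the unique affine function on $S$ agreeing with $u_0$ at the nodes of $\tau$. The triangle inequality yields
\[
\nm{u^\eps-v}{h,\Om}\le\nm{u^\eps-u_1^\eps}{H^1(\Om)}+\nm{u_1^\eps-\tilde u_1^\eps}{h,\Om}+\nm{\tilde u_1^\eps-v}{h,\Om}.
\]
Theorem~\ref{thm:rate} bounds the first piece by $C\sqrt\eps$ (in the $W^{2,d}$ form when $\chi\not\in L^\infty$ and in the $H^2$ form otherwise). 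The second piece splits into the classical interpolation error $u_0-L_\tau u_0$ on each $\tau$, handled by Bramble-Hilbert to give $Ch$, plus an oscillatory remainder $\eps\chi\xe(\na u_0-\na L_\tau u_0)$ whose energy norm I would control patch by patch through the local multiplier estimate, thereby bypassing $\nm{\na\chi}{L^\infty}$.

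The main contribution is the third piece $\nm{\tilde u_1^\eps-v}{h,\Om}$, which I would localize on each $S$. Since $\na L_\tau u_0$ is constant on $S$, the cell problem implies that $\tilde u_1^\eps$ is $a_S$-harmonic on $S$ in weak form, while $v|_S$ is $a_S$-harmonic with affine boundary data $L_\tau u_0$ on $\pa S$; hence $\tilde u_1^\eps-v$ is $a_S$-harmonic with boundary data $\eps\chi\xe\na L_\tau u_0$ of order $\eps$. The energy estimate on $S$, combined with the local multiplier estimate applied to the oscillatory boundary trace, controls $\nm{\na(\tilde u_1^\eps-v)}{L^2(S)}$ by a constant times $\eps$, and restricting to $\tau\subset S$, where $\text{dist}(\pa\tau,\pa S)\ge\gamma_2 h$, converts this boundary-layer bound into the sharp $\eps/h$ via Lemma~\ref{lemma:local1st} together with an interior Caccioppoli-type regularity step. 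For the consistency term, element-wise integration by parts kills the volume contributions (using $\mc{L}_\eps u^\eps=f$) and leaves boundary traces of $A\xe\na u^\eps$ against jumps of $w$; replacing $u^\eps$ by $u_1^\eps$ on each interface via Theorem~\ref{thm:rate}, applying the trace inequality on $\pa\tau$, and invoking the local multiplier estimate once more delivers the same $\sqrt\eps+\eps/h+h$ order.

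The hardest step is the local comparison on $S$ just described: classical analyses use $\nm{\chi}{W^{1,\infty}}$ to estimate the corrector boundary data and the scalar maximum principle to transfer bounds from $S$ down to $\tau$, and both must be abandoned here ($A$ is only bounded measurable, and we allow systems $m\ge 2$). The joint use of the local multiplier estimate and Lemma~\ref{lemma:local1st} is precisely designed to replace them, and the oversampling margin $\gamma_2 h$ is what converts a boundary-layer contribution of order $\eps$ into the sharp interior rate $\eps/h$.
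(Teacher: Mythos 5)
Your overall strategy -- Strang's second lemma plus a three--term triangle inequality for the approximation error $\nm{u^\eps-u_1^\eps}{H^1}+\nm{u_1^\eps-\wt{u}_1^\eps}{h}+\nm{\wt{u}_1^\eps-\wt{u}}{h}$ -- matches the paper, and your identification of $\wt{u}_1^\eps$ and $\wt{u}$ as $a_S$-harmonic with boundary data differing by $\eps\chi\xe\na L_\tau u_0$ is correct. However, the key step that produces the $\eps/h$ rate is misstated in a way that does not fix itself. You claim the energy estimate on $S$ gives $\nm{\na(\wt{u}_1^\eps-\wt{u})}{L^2(S)}\le C\eps$; this is false. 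The natural $H^1$ extension of the boundary data $\eps\chi\xe\na L_\tau u_0$ has gradient of order $\nm{\na\chi}{L^2}\abs{\na L_\tau u_0}$, i.e.\ $O(1)$ in $\eps$, so the energy estimate on $S$ gives no smallness at all in $\eps$. What one actually proves (Lemma~\ref{lema:localhomo}) is an $L^2$ bound $\nm{\wt{u}-\wt{u}_1^\eps}{L^2(S)}\le C\eps\nm{\na\wt{u}_0}{L^2(S)}$, and \emph{that} requires the scale-invariant $L^{2d/(d-1)}$ homogenization rate (Theorem~\ref{thm:scale-invariant}), which your argument never invokes; Caccioppoli then converts this $L^2(S)$ estimate into the $H^1(\tau)$ bound with the $1/h_\tau$ loss, giving $\eps/h$. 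Conversely, if you had $\nm{\na(\cdot)}{L^2(S)}\le C\eps$, restriction to $\tau$ would keep $O(\eps)$; Caccioppoli does not ``convert'' an interior $H^1$ bound into a worse $\eps/h$ bound, so your description of the mechanism is inverted. Also, Lemma~\ref{lemma:local1st} plays no role in the approximation error; the paper uses it only in the consistency term.

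For the consistency term your route is genuinely different from the paper's. Element-wise integration by parts produces $\sum_\tau\int_{\pa\tau}w\cdot(A\xe\na u^\eps\cdot n)$, but from here it is not legitimate to ``replace $u^\eps$ by $u_1^\eps$ via Theorem~\ref{thm:rate} and apply the trace inequality'': Theorem~\ref{thm:rate} controls $\na(u^\eps-u_1^\eps)$ in $L^2(\Om)$, not its trace on $\cup\pa\tau$, and the conormal derivative $A\xe\na u^\eps\cdot n$ lives only in $H^{-1/2}(\pa\tau)$ when $A$ is merely bounded measurable, so a naive pairing with $w$ on faces does not close. The paper sidesteps this by writing $\dual{f}{w}-a_h(u^\eps,w)=\dual{f}{w-w_0}-a_h(u^\eps,w-w_1^\eps)-a_h(u^\eps,w_1^\eps-w_0)$ (using conformity of $w_0$) and handling the last term with a cut-off function $\rho_\eps$ supported in the $\eps$-collar $\tau_{2\eps}$; Lemma~\ref{lemma:local1st} then bounds $\nm{\na u_1^\eps}{L^2(\tau_{2\eps})}$ by $C\sqrt{\eps/h_\tau}$ (plus $\eps$), which together with a gradient bound on the cut-off remainder yields $\eps/h$ without ever taking traces of $\na u^\eps$. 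In short: the proposal has the right skeleton and the right auxiliary tools (local multiplier estimate, Lemma~\ref{lemma:local1st}), but the central $\eps/h$ estimate in the approximation error is based on an incorrect bound, the essential dependence on Theorem~\ref{thm:scale-invariant} is missing, and the consistency argument as written would require trace-level control of the flux that the given hypotheses do not supply.
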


The implication of the above theorem is as follows.
\begin{enumerate}
\item The convergence rate of MsFEM proved above is the same with that in~\cite{EfendievHouWu:2000} for the scalar elliptic problem in two dimension, while we remove the superfluous technical assumptions on the coefficient $a^\eps$, the homogenized solution $u_0$ and the correctors $\chi$. 

\item The convergence rate of MsFEM is new for elliptic systems as well as problems in three dimension. 

\item We clarify the dependence of the right-hand side of the energy error estimates on $u_0$ and $f$ in the natural Sobolev norms, which together with the Aubin-Nitsche dual argument yields the convergence rate of MsFEM in L$^{d/(d-1)}-$norm. In particular, we obtain the L$^2$ error estimate for problem in $d=2$ and scalar elliptic problem in  $d=3$, cf. Theorem~\ref{thm:l2err}.

\item It would be interesting to know whether Assumption A can be removed or to what degree it can be weakened. One may start with making clear how the constants $C$ in~\eqref{eq:H1errb} and~\eqref{eq:H1err} depend on $\gamma_1$ and $\gamma_2$.  Insightful discussion on this point may be found in~\cite{HenningPeterseim:2013}.
\end{enumerate}

The proof of Theorem~\ref{thm:main} is based on {\em the second lemma of Strang}~\cite{Berger:1972} because MsFEM with oversampling is a nonconforming method. 
\begin{equation}\label{eq:strang}
\nm{u^\eps-u_h}{h}\le C\Lr{\inf_{v\in V_h^0}\nm{u^\eps-v}{h}+\sup_{w\in V_h^0}
\dfrac{\abs{\dual{f}{w}-a_h(u^\eps,w)}}{\nm{w}{h}}},
\end{equation}
where $C$ depends on $\lam,\Lam,\gamma_1$ and $\gamma_2$. Therefore, the error estimate boils down to bounding the approximation error and the consistency error. To this end, we firstly define a MsFEM interpolant on each element $\tau\in\mc{T}_h$ as 
\begin{equation}\label{eq:mlinterpolant}
\wt{u}(x)|_{\tau}{:}=\sum_{i=1}^{d+1}u_0(x_i)\phi_i(x),
\end{equation}
which may be written as
\(
\wt{u}^{\beta}=\sum_{i=1}^{d+1}\sum_{k=1}^{d+1}u_0^\beta(x_i)c_{ik}^\beta\psi_k^\beta(x).
\)
It is well-defined over $S$, and
\[
\mc{L}_\eps(\wt{u})=0\quad\text{in\quad}S\qquad\text{and}\quad\wt{u}=\wt{u}_0\quad\text{on\quad}\pa S,
\]
where $\wt{u}_0^{\beta}=\sum_{i=1}^{d+1}\sum_{k=1}^{d+1}u_0^\beta(x_i)c_{ik}^\beta Q_k^\beta(x)$. 
It is clear that the homogenization limit of $\wt{u}$ is $\wt{u}_0$.
By definition, $\wt{u}_0|_{\tau}=\pi u_0$ with $\pi u_0$ the linear Lagrange interpolant of $u_0$ over $\tau$. The first order approximation of $\wt{u}$ is defined as
\[
\wt{u}_1^{\eps}{:}=\wt{u}_0+\eps(\chi\cdot\na)\wt{u}_0\quad\text{and\quad}
\wt{u}_1^{\eps}|_{\tau}=\pi u_0+\eps(\chi\cdot\na)\pi u_0.
\]

The approximation error of the MsFEM interpolant is given by
\begin{lemma}\label{lema:app}
Under the same assumptions in Theorem~\ref{thm:main}, for $m=1,d=2,3$ or $m\ge 2, d=2$, there holds
\begin{equation}\label{eq:appb}
	\nm{u^\eps-\wt{u}}{h}\le C\Lr{(\sqrt\eps+h)\nm{\na u_0}{H^1(\Om)}+\dfrac{\eps}h\nm{\na u_0}{L^2(\Om)}},
\end{equation}
where $C$ depends on $\lam,\Lam,\Om$ and the mesh parameters $\sigma_0,\sigma_1,\gamma_1,\gamma_2$.

Furthermore,  for $m\ge 2$ and $d=3$, there holds
\begin{equation}\label{eq:appa}
	\nm{u^\eps-\wt{u}}{h}\le C\Lr{(\sqrt\eps+h)\nm{\na u_0}{W^{1,3}(\Om)}+\dfrac{\eps}h\nm{\na u_0}{L^2(\Om)}},
\end{equation}
where $C$ depends on $\lam,\Lam,\Om$ and the mesh parameters $\sigma_0,\sigma_1,\gamma_1,\gamma_2$.
\end{lemma}

\begin{remark}
The interpolation estimate~\eqref{eq:appa} is new, while~\eqref{eq:appb} with $m=1$ and $d=2$ was proved in~\cite{EfendievHouWu:2000} by assuming that $\na\chi$ is bounded. The proof therein does not apply to elliptic systems because the maximum principle used in the proof may fail for elliptic
systems~\cite{MazyaRossmann:2010}. We shall use the local multiplier estimates in Lemma~\ref{lemma:Mulplier} to remove the boundedness assumption on $\na\chi$. 
\end{remark}

The next lemma concerns the estimate of the consistency error.
\begin{lemma}\label{lema:consis}
Under the same assumptions in Theorem~\ref{thm:main}, for $m=1,d=2,3$ or $m\ge2,d=2$, there holds
\begin{equation}\label{eq:consisb}
	\sup_{w\in V_h^0}\dfrac{\abs{\dual{f}{w}-a_h(u^\eps,w)}}{\nm{w}{h}}\le C\Lr{\eps+\eps/h}
	\Lr{\nm{\na u_0}{H^1(\Om)}+\nm{f}{L^2(\Om)}}.
\end{equation}
where $C$ depends on $\lam,\Lam,\Om$ and the mesh parameters $\sigma_0,\sigma_1,\gamma_1,\gamma_2$.

For $m\geq2$ and $d=3$, there holds
\begin{equation}\label{eq:consis}
	\sup_{w\in V_h^0}\dfrac{\abs{\dual{f}{w}-a_h(u^\eps,w)}}{\nm{w}{h}}\le C\Lr{\eps+\eps/h}
	\Lr{\nm{\na u_0}{W^{1,3}(\Om)}+\nm{f}{L^2(\Om)}},
\end{equation}
where $C$ depends on $\lam,\Lam,\Om$ and the mesh parameters $\sigma_0,\sigma_1,\gamma_1,\gamma_2$.
\end{lemma}
\vskip .3cm
\noindent
{\em Proof of Theorem~\ref{thm:main}\;}
Substituting Lemma~\ref{lema:app} and Lemma~\ref{lema:consis} into~\eqref{eq:strang}, we get Theorem~\ref{thm:main}.\qed
\subsubsection{Technical Results}
The main ingredients in proving Lemma~\ref{lema:app} and Lemma \ref{lema:consis} are the following local multiplier estimate, which controls the $L^2-$norm of $(\na\chi)\psi$ for certain $\psi$, and a local estimate of $\na u_1^\eps$; cf. Lemma~\ref{lemma:local1st}.
\begin{lemma}\label{lemma:Mulplier}
Let $\chi$ be defined in~\eqref{eq:corrector} and suppose that $D$ is a convex polyhedron. For any $\psi\in W^{1,d}(D;\mb{R}^m)$, there exists $C$ independent of the size of $D$ such that
\begin{equation}\label{eq:mulpliera}
\eps\nm{\na\chi\xe\psi}{L^2(D)}\le C\abs{D}^{1/2-1/d}\Lr{\nm{\psi}{L^d(D)}
+\eps\nm{\na\psi}{L^d(D)}}.
\end{equation}

If $\nm{\chi}{L^\infty}$ is bounded, then for any $\psi\in H^1(D;\mb{R}^m)$, there exists $C$ independent of the size of $D$ such that
\begin{equation}\label{eq:mulplierb}
\eps\nm{\na\chi\xe\psi}{L^2(D)}\le C(1+\nm{\chi}{L^\infty})\Lr{\nm{\psi}{L^2(D)}
		+\eps\nm{\na\psi}{L^2(D)}}.
\end{equation}
\end{lemma}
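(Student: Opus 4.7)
The plan is to attack both estimates via a microscopic cell decomposition. First I would cover $D$ by mutually disjoint $\eps$-cubes $\{Q_z\}_{z\in\mc{Z}}$ adapted to the period of $\chi(\cdot/\eps)$, so that $|\mc{Z}|\sim|D|/\eps^d$ and $\int_{Q_z}|(\nabla\chi)(x/\eps)|^p\dx=\eps^d\|\nabla\chi\|_{L^p(Y)}^p$ by periodicity for every admissible $p$. On each $Q_z$ I decompose $\psi=\bar\psi_z+(\psi-\bar\psi_z)$, where $\bar\psi_z$ is the mean of $\psi$ over $Q_z$, and estimate the average and fluctuation contributions separately.

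The average contribution is controlled uniformly: the $L^2$ bound $\|(\nabla\chi)(x/\eps)\|_{L^2(Q_z)}^2=\eps^d\|\nabla\chi\|_{L^2(Y)}^2$ combined with $|\bar\psi_z|^2\le\eps^{-d}\|\psi\|_{L^2(Q_z)}^2$ yields, after summation, $\eps^2\sum_z\|(\nabla\chi)(x/\eps)\bar\psi_z\|_{L^2(Q_z)}^2\le C\eps^2\|\psi\|_{L^2(D)}^2$. This is already in the form required by~\eqref{eq:mulplierb} and is converted to the form of~\eqref{eq:mulpliera} via H\"older's inequality $\|\psi\|_{L^2(D)}\le|D|^{1/2-1/d}\|\psi\|_{L^d(D)}$.

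For the fluctuation the two proofs diverge. For~\eqref{eq:mulpliera} I apply H\"older with the Meyers exponent $p>2$ from~\eqref{eq:meyers}: choosing $q$ with $1/2=1/p+1/q$, and combining $\|(\nabla\chi)(x/\eps)\|_{L^p(Q_z)}\le C\eps^{d/p}$ with the scaled Poincar\'e--Sobolev inequality $\|\psi-\bar\psi_z\|_{L^q(Q_z)}\le C\eps^{d/q}\|\nabla\psi\|_{L^d(Q_z)}$, yields a per-cell bound of order $\eps^{2+d}\|\nabla\psi\|_{L^d(Q_z)}^2$. Summing via H\"older in $\ell^p$ using $\sum_z\|\nabla\psi\|_{L^d(Q_z)}^d=\|\nabla\psi\|_{L^d(D)}^d$ and $|\mc{Z}|^{1-2/d}\sim(|D|/\eps^d)^{1-2/d}$ produces the target with a factor of $\eps$ to spare. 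For~\eqref{eq:mulplierb} I instead invoke a Caccioppoli-type estimate for $\wt{\chi}^{\eps}(x):=\eps\chi(x/\eps)$, which by rescaling~\eqref{eq:corrector} solves $\divop[A(\cdot/\eps)(\nabla\wt{\chi}^{\eps}+I)]=0$. Testing this equation against $\wt{\chi}^{\eps}(\psi-\bar\psi_z)^2\zeta^2$ for a smooth cutoff $\zeta$ adapted to $Q_z$, invoking ellipticity, and bounding $|\wt{\chi}^{\eps}|\le\eps\|\chi\|_{L^\infty}$ produces $\int_{Q_z}|(\nabla\chi)(x/\eps)|^2(\psi-\bar\psi_z)^2\dx\le C\eps^2(1+\|\chi\|_{L^\infty}^2)\|\nabla\psi\|_{L^2(Q_z^+)}^2$ on a slight enlargement $Q_z^+$, after absorbing the $\eps^{-2}\|\psi-\bar\psi_z\|_{L^2}^2$ term via Poincar\'e. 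Summation then gives~\eqref{eq:mulplierb}.

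The hard part will be the Caccioppoli step for~\eqref{eq:mulplierb}: the cutoff $\zeta$ must control the boundary contributions on $\pa Q_z\cap\pa D$, for which the convexity of $D$ (together with a Stein-type extension of $\psi$ across $\pa D$) appears necessary, and constants must be tracked to remain independent of $|D|$. A further nuisance is that for the elliptic system $m\ge 2$ only the Legendre--Hadamard condition~\eqref{eq:ellp} is assumed, so the Caccioppoli has to be derived in a G\aa rding form and the resulting lower-order $L^2$ term absorbed. A subsidiary technicality in part (a) is that the Meyers exponent $p$ is unspecified, so the Poincar\'e--Sobolev constant on $Q_z$ depends on $q=2p/(p-2)$; however, the exponents of $\eps$ combine to $\eps^{2+d}$ independent of $p$, and the $\ell^p$ summation aligns to reproduce the factor $|D|^{1/2-1/d}$ without restriction on $p$.
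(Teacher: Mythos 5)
Your route is genuinely different from the paper's. The paper proves Lemma~\ref{lemma:Mulplier} in three lines: rescale $D$ to diameter $1$ via $x'=x/L$, observe that $x/\eps = x'/\eps'$ with $\eps'=\eps/L$ so that $\eps\nabla\chi(x/\eps)=\eps'\nabla_{x'}\chi(x'/\eps')$, apply the \emph{global} multiplier estimates~\eqref{eq:multipliera}--\eqref{eq:multiplierb} (Shen's Lemma~3.2.8) on the unit-diameter domain $\wh D$ as a black box, and undo the scaling. The explicit factor $\abs{D}^{1/2-1/d}$ drops out of the Jacobian of the change of variables applied to the $L^2$ and $L^d$ norms. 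You instead reprove the multiplier estimates from scratch by a periodicity-cell decomposition, average/fluctuation splitting, Meyers--H\"older for~\eqref{eq:mulpliera}, and a Caccioppoli argument for~\eqref{eq:mulplierb}. What the paper's argument buys is brevity and robustness: it never touches the elliptic system, so the Legendre--Hadamard versus uniform ellipticity distinction is invisible. What your argument buys is self-containedness and transparency about where periodicity enters, at the cost of re-deriving a result the paper is content to cite.

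Two cautions on your version. First, the ``factor of $\eps$ to spare'' you report is a notational artifact, not a genuine improvement: in the paper $\na\chi\xe$ denotes $\nabla_x\bigl(\chi(x/\eps)\bigr)=\eps^{-1}(\nabla\chi)(x/\eps)$ (see the displayed expansion of $\na(u_1^\eps-\wt u_1^\eps)$ before~\eqref{eq:1stcompare}, and the identity $\eps\na\chi(x/\eps)=\eps'\na_{x'}\chi(x'/\eps')$ used in the paper's own proof, which only balances under this convention), so the $\eps$ on the left of~\eqref{eq:mulpliera} cancels the $\eps^{-1}$ from the chain rule. After converting your expressions to the paper's convention, your per-cell bookkeeping reproduces~\eqref{eq:mulpliera} exactly, with no slack. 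Second, the Caccioppoli step for~\eqref{eq:mulplierb} is a real gap, not a nuisance: under the Legendre--Hadamard condition~\eqref{eq:ellp} the quadratic form $\int A\nabla v\cdot\nabla v\,\zeta^2$ is not pointwise coercive, and a G\aa rding-type Caccioppoli on a generic cell $Q_z$ (let alone a boundary sliver $Q_z\cap D$) requires the periodic/Fourier machinery that Shen develops precisely so that one may cite it. The paper's rescaling argument avoids this entirely, which is a strong reason to prefer it. You should also make the extension argument for boundary cells explicit in both parts, not only in~\eqref{eq:mulplierb}.
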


The proof depends on the following multiplier estimates proved in~\cite[Lemma 3.2.8]{Shen:2018}: For any $\psi\in W^{1,d}(\Om;\mb{R}^m)$,
\begin{equation}\label{eq:multipliera}
\eps\nm{\na\chi\xe\psi}{L^2(\Om)}\le C\Lr{\nm{\psi}{L^d(\Om)}+\eps\nm{\na\psi}{L^d(\Om)}},
\end{equation}
and for any $\psi\in H^1(\Om;\mb{R}^m)$,
\begin{equation}\label{eq:multiplierb}
\eps\nm{\na\chi\xe\psi}{L^2(\Om)}\le C(1+\nm{\chi}{L^\infty})\Lr{\nm{\psi}{L^2(\Om)}+\eps\nm{\na\psi}{L^2(\Om)}},
\end{equation}
where $C$ depends on $\lam,\Lam$ and $\Om$. These multiplier estimates are crucial to prove the error 
bounds~\eqref{eq:1stratea} and~\eqref{eq:1strateb}. These estimates have been refined in Lemma~\ref{lemma:Mulplier} by tracing the dependence of the constant on the size of the domain.

\begin{proof}
Denote $L=\text{diam\;}D$, and we apply the scaling $x'=x/L$ to $D$ so that the rescaled element $\wh{D}$ has diameter $1$. Note that
\[
x/\eps=x'/\eps'\qquad\text{with}\quad\eps'=\eps/L.
\]
Hence $\eps\na\chi(x/\eps)=\eps'\;\na_{x'}\chi(x'/\eps')$ and $\psi(x)=\psi(Lx')=\wh{\psi}(x')$. Applying~\eqref{eq:multipliera} to $\wh{D}$, we obtain that there exists $C$ depends only on $\wh{D}$ such that
\begin{align*}
\eps\nm{\na\chi\xe\psi}{L^2(D)}&\le\Lr{\abs{D}/\abs{\wh{D}}}^{1/2}\eps'\nm{\na_{x'}\chi(x'/\eps')\wh{\psi}}{L^2(\wh{D})}\\
&\le C\abs{D}^{1/2}\Lr{\nm{\wh{\psi}}{L^d(\wh{D})}+\eps'\nm{\na_{x'}\wh{\psi}}{L^d(\wh{D})}}\\
&\le C\abs{D}^{1/2-1/d}\Lr{\nm{\psi}{L^d(D)}+\eps\nm{\na\psi}{L^d(D)}}.
\end{align*}
This yields~\eqref{eq:mulpliera}.
	
Replacing~\eqref{eq:multipliera} by~\eqref{eq:multiplierb} and proceeding along the same line that leads to~\eqref{eq:mulpliera}, we obtain~\eqref{eq:mulplierb}.
\end{proof}

Another ingredient of the error estimate is the quantitative estimates for the MsFEM functions in $V_h$, which have been used in all the previous study. For any $w\in V_h$, we may write, on each element $\tau\in\mc{T}_h$, 
\[
w^{\beta}(x)|_{\tau}{:}=\sum_{i=1}^{d+1}w_i\phi_i(x)=\sum_{i=1}^{d+1}\sum_{k=1}^{d+1}w_i^\beta c_{ik}^\beta\psi_k^\beta(x)
\]
for certain coefficients $w_i\in\mb{R}^m$. It is well-defined over $S$, and
\[
\mc{L}_\eps(w)=0\quad\text{in\quad}S\qquad\text{and}\quad w=w_0\quad\text{on\quad}\pa S,
\]
where $w_0^{\beta}=\sum_{i=1}^{d+1}\sum_{k=1}^{d+1}w_i^\beta c_{ik}^\beta Q_k^\beta(x)$. It is clear that the homogenization limit of $w$ is $w_0$, and there exists $C$ depending on $\lam,\Lam,\gamma_1$ and $\gamma_2$, but independent of $\eps$ and $h_{\tau}$, such that
\begin{equation}\label{eq:mslinear}
	\nm{\na w_0}{L^2(\tau)}\le C\nm{\na w}{L^2(\tau)}\qquad\text{for all\quad}\tau\in\mc{T}_h.
\end{equation}
This inequality was proved in~\cite[Appendix B]{EfendievHouWu:2000}. The first order approximation of $w$ is defined by
\(
w_1^{\eps}{:}=w_0+\eps(\chi\cdot\na)w_0.
\)
\begin{lemma}\label{lema:localhomo}
Suppose that \textbf{Assumption A} is true and $A=A^t$ for $m\ge 2$. For $w\in V_h$, there exists $C$ such that
\begin{equation}\label{eq:l2local}
\nm{w-w_0}{L^2(S)}\le C\eps\nm{\na w_0}{L^2(S)},
\end{equation}
and
\begin{equation}\label{eq:h1local}
\nm{\na(w-w_1^\eps)}{L^2(\tau)}\le C\dfrac{\eps}{h_{\tau}}\nm{\na w_0}{L^2(S)}.
\end{equation}
\end{lemma}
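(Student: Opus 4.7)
The plan is to exploit two structural features of any $w\in V_h$ restricted to an over-sampling patch $S$: (i) $w$ is $\mc{L}_\eps$-harmonic on $S$ with Dirichlet data equal to the affine function $w_0$ of \eqref{eq:mslinear}, and (ii) since $w_0$ is affine, $\na w_0$ is a constant vector on $S$ and $\na^2 w_0\equiv 0$. Both estimates will be derived by feeding a homogenization theorem from \S\ref{sec:homogenization} into a short Hölder/scaling computation, followed in the second case by an interior elliptic estimate.

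For \eqref{eq:l2local}, I deliberately avoid Theorem \ref{thm:rate}(1) on $S$, since a quick rescaling shows it only yields a rate $\sqrt{\eps h_\tau}$, which is too weak. Instead I apply the scale-invariant estimate \eqref{eq:l2rate-scale} to the pair $(w,w_0)$ on $S$. With the Hessian term vanishing, this reads
\[
\nm{w-w_0}{L^p(S)}\le C\eps(\text{diam}\,S)^{-1}\nm{\na w_0}{L^q(S)},
\]
where $p=2d/(d-1)$ and $q=2d/(d+1)$. Since $\na w_0$ is constant on $S$, Hölder's inequality is sharp on both sides: $\nm{\na w_0}{L^q(S)}=|S|^{1/q}|\na w_0|$ and $\nm{w-w_0}{L^2(S)}\le|S|^{1/2-1/p}\nm{w-w_0}{L^p(S)}$. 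Tracking exponents, $\tfrac12-\tfrac1p+\tfrac1q=\tfrac12+\tfrac1d$, and using $|S|^{1/d}\sim\text{diam}\,S$, the volume factors exactly cancel $(\text{diam}\,S)^{-1}$, producing \eqref{eq:l2local}.

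For \eqref{eq:h1local} I set $r:=w-w_1^\eps$. A direct computation using the cell problem \eqref{eq:corrector} contracted against the constant $\na w_0$ shows that $\mc{L}_\eps(w_1^\eps)=0$ distributionally on $\mb{R}^d$—this is the classical fact that the two-scale expansion of an affine function is $\mc{L}_\eps$-harmonic. Combined with $\mc{L}_\eps(w)=0$ in $S$, this gives $\mc{L}_\eps(r)=0$ in $S$. By Assumption~A, $\tau\subset S$ with $\text{dist}(\partial\tau,\partial S)\ge\gamma_2 h_\tau$, and an interior Caccioppoli estimate for $\eps$-harmonic solutions then produces
\[
\nm{\na r}{L^2(\tau)}\le C h_\tau^{-1}\nm{r}{L^2(S)}.
\]
It remains to bound $\nm{r}{L^2(S)}$. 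Splitting $r=(w-w_0)-\eps\chi(x/\eps)\na w_0$, the first piece is $O(\eps)\nm{\na w_0}{L^2(S)}$ by \eqref{eq:l2local}, and the second is controlled via \eqref{eq:corr-est} together with the constancy of $\na w_0$: $\eps\nm{\chi(x/\eps)\na w_0}{L^2(S)}\le C\eps|S|^{1/2}\nm{\chi}{L^2(Y)}|\na w_0|=C\eps\nm{\chi}{L^2(Y)}\nm{\na w_0}{L^2(S)}$. Combining these pieces delivers \eqref{eq:h1local}.

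The main obstacle I anticipate is the interior Caccioppoli step for the elliptic system under only the Legendre--Hadamard condition, since the pointwise lower bound $AF\cdot F\ge\lam|F|^2$ may fail when $m\ge 2$. In the scalar case ($m=1$) this is textbook; for systems it relies on the interior regularity theory for periodic homogenization collected in \cite{Shen:2018}, which is available in the symmetric setting $A^t=A$ used in Theorem \ref{thm:main} when $m\ge 2$. The scaling bookkeeping in the first part, by contrast, is routine once the scale-invariant estimate \eqref{eq:l2rate-scale} is in hand and one observes that the affine nature of $w_0$ trivialises all Hölder losses.
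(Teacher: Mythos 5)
Your argument for~\eqref{eq:l2local} is precisely the one in the paper: apply the scale-invariant estimate~\eqref{eq:l2rate-scale} on $S$ with the Hessian term vanishing (since $w_0$ is affine), and then use the constancy of $\na w_0$ to convert $L^q$ and $L^p$ norms into $L^2$ norms, cancelling the $(\mathrm{diam}\,S)^{-1}$ via $1/q-1/p=1/d$. Your argument for~\eqref{eq:h1local} is also the paper's: $w-w_1^\eps$ is $\mc{L}_\eps$-harmonic in $S$ (the paper states $a_S(w-w_1^\eps,v)=0$ directly; you supply the supporting observation that $w_1^\eps$ is $\mc{L}_\eps$-harmonic because $\na w_0$ is constant and $\chi$ solves~\eqref{eq:corrector}), an interior Caccioppoli estimate under Assumption~A yields the $Ch_\tau^{-1}$ factor, and the $L^2(S)$ norm of $w-w_1^\eps$ is split into $w-w_0$ (controlled by~\eqref{eq:l2local}) and $\eps\chi(x/\eps)\na w_0$ (controlled by~\eqref{eq:corr-est} with $p=2$). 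The caveat you flag about Caccioppoli under only the Legendre--Hadamard condition for systems is a fair one; the paper addresses it by citing~\cite[Corollary~1.37]{HanLin:1997} and by carrying the symmetry assumption $A^t=A$ for $m\ge 2$ from Theorem~\ref{thm:main}, consistent with your remark. In short, the proposal is correct and follows the paper's proof essentially step for step.
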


\begin{proof}
Applying Theorem~\ref{thm:scale-invariant} to $w$, using~\eqref{eq:l2rate-scale} and the fact that $w_0$ is linear over $S$, we obtain
\begin{align*}
\nm{w-w_0}{L^2(S)}&\le\abs{S}^{1/2-1/p}\nm{w-w_0}{L^p(S)}\\
&\le C\eps\abs{S}^{1/2-1/p}\Lr{(\text{diam\,}S)^{-1}\nm{\na w_0}{L^q(S)}+\nm{\na^2 w_0}{L^q(S)}}\\
&=C\dfrac{\eps}{\text{diam\,}S}\abs{S}^{1/2-1/p+1/q}\abs{\na w_0}\\
&\le C\eps\nm{\na w_0}{L^2(S)},
\end{align*}
where we have used $1/q-1/p=1/d$ in the last step. This gives~\eqref{eq:l2local}.

Note that
\[
a_S(w-w_1^\eps,v)=0\qquad\text{for all\quad}v\in H_0^1(S;\mb{R}^m).
\]
By the Caccioppoli inequality~\cite[Corollary 1.37]{HanLin:1997} and \textbf{Assumption A}, there exists $C$ that depends on $\lam,\Lam,\gamma_1$ and $\gamma_2$ such that
\begin{equation}\label{eq:coccipolli}
\nm{\na(w-w_1^\eps)}{L^2(\tau)}\le\dfrac{C}{h_{\tau}}\nm{w-w_1^\eps}{L^2(S)}.
\end{equation}%

Using the fact that $\na w_0$ is a piecewise constant matrix and~\eqref{eq:corr-est} with $p=2$, we obtain
\begin{align*}
\nm{w_1^\eps-w_0}{L^2(S)}&=\eps\nm{\chi\xe\na w_0}{L^2(S)}
=\eps\nm{\chi\xe}{L^2(S)}\abs{\na w_0}\\
&\le C\eps\abs{S}^{1/2}\nm{\chi}{L^2(Y)}\abs{\na w_0}=C\eps\nm{\chi}{L^2(Y)}\nm{\na w_0}{L^2(S)},
\end{align*}
which together with~\eqref{eq:l2local} and the triangle inequality gives
\[
\nm{w-w_1^\eps}{L^2(S)}\le\nm{w-w_0}{L^2(S)}+\nm{w_1^\eps-w_0}{L^2(S)}
\le C\eps\nm{\na w_0}{L^2(S)}.
\]
Substituting the above inequality into~\eqref{eq:coccipolli}, we obtain~\eqref{eq:h1local}.
\end{proof}

Another useful tool is the following inequality for a tubular domain defined below. Let $\tau\in\mc{T}_h$, for any $\del>0$, we define 
\[
\tau_{\del}{:}=\set{x\in\tau}{\text{dist}(x,\pa\tau)\le\del}.
\]
\begin{lemma}\label{lemma:retrace}
Let $1\le p<\infty$, for any $v\in W^{1,p}(\tau)$, there exists $C$ depending on $p,d$ and $\sigma_0$ such that 
\begin{equation}\label{eq:trace1}
\nm{v}{L^p(\tau_\del)}\le C(\del/h_\tau)^{1/p}\nm{v}{W^{1,p}(\tau)}.
\end{equation}
\end{lemma}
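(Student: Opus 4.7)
My plan is to reduce to a reference simplex and then exploit a one-dimensional integration along the inward normal on each face, combined with a standard trace inequality. The proof is routine once the correct parametrization of $\tau_\delta$ is set up, so I sketch only the flow.

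First I would fix an affine diffeomorphism $F_\tau\colon\wh\tau\to\tau$, where $\wh\tau$ is a fixed reference simplex of diameter one. Shape-regularity $h_\tau/\rho_\tau\le\sigma_0$ guarantees that the linear part of $F_\tau$ and its inverse have operator norms bounded by $C h_\tau$ and $C h_\tau^{-1}$ respectively, with $C=C(\sigma_0)$. In particular, $F_\tau^{-1}(\tau_\delta)\subset\wh\tau_{\wh\delta}$ with $\wh\delta\le C(\sigma_0)\,\delta/h_\tau$. Setting $\wh v:=v\circ F_\tau$ and changing variables reduces the claim to proving, on the reference simplex,
\[
\nm{\wh v}{L^p(\wh\tau_{\wh\delta})}\le C\wh\delta^{\,1/p}\nm{\wh v}{W^{1,p}(\wh\tau)},
\]
where we may assume $\wh\delta\le 1$ (otherwise $\wh\tau_{\wh\delta}=\wh\tau$ and the estimate is trivial).

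Next I would decompose $\wh\tau_{\wh\delta}$ as the union, over the finitely many faces $F_k$ of $\wh\tau$, of the tubular pieces
\[
T_k:=\set{y+t\nu_k}{y\in F_k,\ 0\le t\le\wh\delta}\cap\wh\tau,
\]
where $\nu_k$ is the inward unit normal to $F_k$. Because $\wh\tau$ is polyhedral with flat faces, this parametrization is bi-Lipschitz with constants depending only on $\wh\tau$ (hence only on $\sigma_0$), and the Jacobian is bounded. By density, I may take $\wh v\in C^1(\overline{\wh\tau})$ and write $\wh v(y+t\nu_k)=\wh v(y)+\int_0^t\pa_{\nu_k}\wh v(y+s\nu_k)\,\mathrm{d}s$; H\"older then gives
\[
\abs{\wh v(y+t\nu_k)}^p\le 2^{p-1}\abs{\wh v(y)}^p+2^{p-1}t^{p-1}\int_0^t\abs{\na\wh v(y+s\nu_k)}^p\,\mathrm{d}s.
\]
Integrating $t\in[0,\wh\delta]$ and $y\in F_k$ and using $\wh\delta\le 1$, I obtain
\[
\nm{\wh v}{L^p(T_k)}^p\le C\wh\delta\,\nm{\wh v}{L^p(F_k)}^p+C\wh\delta^{\,p}\nm{\na\wh v}{L^p(\wh\tau)}^p.
\]
Summing over $k$ and applying the classical trace inequality $\nm{\wh v}{L^p(\pa\wh\tau)}\le C\nm{\wh v}{W^{1,p}(\wh\tau)}$ yields $\nm{\wh v}{L^p(\wh\tau_{\wh\delta})}^p\le C\wh\delta\,\nm{\wh v}{W^{1,p}(\wh\tau)}^p$, which is the reference-simplex claim.

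Finally, I would scale back through $F_\tau$ to recover~\eqref{eq:trace1}, absorbing all Jacobian factors into the constant $C=C(p,\sigma_0)$. The only place where one has to be a little careful is that on the reference simplex the $W^{1,p}$ norm mixes zeroth and first order terms, so I need $\wh\delta\le 1$ to ensure the factor $\wh\delta^{p}$ from the gradient contribution is dominated by $\wh\delta$; this is the substantive point in the argument but causes no real obstacle because the estimate is stated as a bound in the whole $W^{1,p}(\tau)$ norm, and the case $\delta\gtrsim h_\tau$ is trivial. No genuinely hard step is expected.
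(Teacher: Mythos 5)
Your proposal is correct in substance, but it is organized quite differently from the paper's proof, so let me compare. You reduce to a unit reference simplex, decompose the collar $\wh{\tau}_{\wh{\del}}$ into perpendicular tubular slabs over the faces, integrate the fundamental theorem of calculus along the inward normals, and then invoke the classical trace inequality on $\partial\wh{\tau}$ before scaling back. The paper instead works directly on the physical element: for each $s\in(0,\del)$ it considers the shrunken simplex $\tau_s^c=\tau\setminus\tau_s$ and the vector field $m(x)=\tfrac{\abs{f}}{d\abs{\tau_s^c}}(x-a_f)$ on each face $f$, applies the divergence theorem to obtain a weighted surface estimate $\int_{\partial\tau_s^c}\abs{v}^p\le \tfrac{C\sigma_0}{h_\tau}\bigl(\int_\tau\abs{v}^p+ph_\tau\int_\tau\abs{v}^{p-1}\abs{\na v}\bigr)$ with explicit shape-regularity constants, and then integrates over $s\in(0,\del)$. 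The two arguments are morally the same coarea-type computation, but the paper's version is self-contained (it effectively re-proves the trace inequality with the required $h_\tau$-scaling built in) and avoids the pull-back/push-forward bookkeeping; yours is more modular at the cost of quoting the reference-element trace theorem and tracking affine Jacobians.

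One point you should make explicit: the claim that the slabs $T_k$ cover $\wh{\tau}_{\wh{\del}}$ is not completely immediate, since a point near a vertex can be within $\wh\del$ of a face while its perpendicular projection onto that face's hyperplane falls outside the face. This does work out for a simplex — partitioning $\wh{\tau}$ into the $d+1$ pyramids over the faces with apex at the incenter shows that the nearest face is always reached by orthogonal projection into the face itself — but it deserves a sentence. Also note that, as written, both your bound and the paper's produce a factor $h_\tau$ multiplying $\nm{\na v}{L^p(\tau)}$; this is harmless here because $h_\tau\le\mathrm{diam}\,\Om$, but it means the constant implicitly depends on an upper bound for the mesh size (equivalently, the estimate is really stated in the unscaled $W^{1,p}$ norm). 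These are minor; the core of your argument is sound.
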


This inequality has appeared in many occurrences, and we give a proof for the readers' convenience.
\begin{proof}
For any $0<s<\del$, we let $\tau_s^c=\tau\backslash\tau_s$. It is clear that $\tau_s^c$ is also a simplex. For any face $f$ of $\tau_s^c$, we define a vector
\[
m(x)=\dfrac{\abs{f}}{d\abs{\tau_s^c}}(x-a_f),
\]
where $a_f$ is the vertex opposite to $f$. A direct calculation gives that $m(x)\cdot n_f=1$ for any $x\in f$, while $m(x)\cdot n_g$ vanishes on the remaining faces of $\tau_s^c$, where $n_g$ is the outward normal of the face $g$ so that $x\in g$. Using the divergence theorem, we obtain
\begin{align*}
\int_f\abs{v(x)}^p\mathrm{d}\sigma(x)&=\int_f\abs{v(x)}^pm(x)\cdot n_f\mathrm{d}\sigma(x)=\int_{\tau_s^c}\divop\left(\abs{v(x)}^pm(x)\right)\dx\\
&=\int_{\tau_s^c}\Lr{\Lr{m(x)\cdot\na}\abs{v(x)}^p+\abs{v(x)}^p\divop m(x)}\dx.
\end{align*}
A direct calculation gives 
\[
\max_{x\in\tau_s^c}\abs{m(x)}\le\sigma_0\qquad\divop m(x)=\dfrac{\abs{f}}{\abs{\tau_s^c}}\le\dfrac{d\sigma_0}{h_{\tau}}.
\]
A combination of the above two inequalities leads to
\begin{align*}
\int_f\abs{v(x)}^p\mathrm{d}\sigma(x)&\le\sigma_0\Lr{\dfrac{d}{h_{\tau}}\int_{\tau_s^c}\abs{v(x)}^p\dx+p\int_{\tau_s^c}\abs{v(x)}^{p-1}\abs{\na v(x)}\dx}\\
&\le\dfrac{\sigma_0}{h_{\tau}}\Lr{d\int_{\tau}\abs{v(x)}^p\dx+ph_{\tau}\int_{\tau}\abs{v(x)}^{p-1}\abs{\na v(x)}\dx}.
\end{align*}

Summing up all faces $f\in\pa\tau_s^c$, we obtain
\[
\int_{\pa\tau_s^c}\abs{v(x)}^p\mathrm{d}\sigma(x)\le\dfrac{(d+1)\sigma_0}{h_{\tau}}\Lr{d
\int_{\tau}\abs{v(x)}^p\dx+ph_{\tau}\int_{\tau}\abs{v(x)}^{p-1}\abs{\na v(x)}\dx}.
\]
Integration with respect to $s$ from $0$ to $\del$, we obtain
\[
\int_{\tau_{\del}}\abs{v(x)}^p\mathrm{d}\sigma(x)\le\dfrac{(d+1)\sigma_0\del}{h_{\tau}}\Lr{d\int_{\tau}\abs{v(x)}^p\dx+ph_{\tau}\int_{\tau}\abs{v(x)}^{p-1}\abs{\na v(x)}\dx}.
\]
Using H\"older's inequality, we obtain
\[
\nm{v}{L^p(\tau_{\del})}\le\Lr{\del/h_{\tau}}^{1/p}((d+1)\sigma_0)^{1/p}\Lr{d^{1/p}\nm{v}{L^p(\tau)}+(ph_{\tau})^{1/p}
\nm{v}{L^p(\tau)}^{1-1/p}\nm{\na v}{L^p(\tau)}^{1/p}}.
\]
This gives~\eqref{eq:trace1} for $p>1$.

The proof for $p=1$ is the same, we omit the details.
\end{proof}

To bound the consistency error, we need a local estimate of $\na u_1^\eps$, which helps us to remove the  extra smoothness assumption on $\chi$.
\begin{lemma}\label{lemma:local1st}
There exists $C$ independent of $\eps,\del$ and $h_{\tau}$ such that
\begin{equation}\label{eq:u1trunc}
\nm{\na u_1^\eps}{L^2(\tau_\del)}\le C\Lr{\eps+\sqrt{\del/h_{\tau}}}\abs{\tau}^{1/2-1/d}\nm{\na u_0}{W^{1,d}(\tau)}.
\end{equation}

If $\chi$ is bounded, then
\begin{equation}\label{eq:u1truncb}
\nm{\na u_1^\eps}{L^2(\tau_\del)}\le C\Lr{\eps+\sqrt{\del/h_{\tau}}}\Lr{1+\nm{\chi}{L^\infty(Y)}}\nm{\na u_0}{H^1(\tau)}.
\end{equation}
\end{lemma}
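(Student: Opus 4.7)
The plan is to split
\[
\na u_1^\eps=\na u_0+(\na\chi)\xe\na u_0+\eps\chi\xe\na^2 u_0
\]
and bound each piece in $L^2(\tau_\del)$. For the first piece, Lemma~\ref{lemma:retrace} with $p=2$ together with H\"older's inequality yields $\nm{\na u_0}{L^2(\tau_\del)}\le C(\del/h_\tau)^{1/2}\nm{\na u_0}{H^1(\tau)}\le C(\del/h_\tau)^{1/2}|\tau|^{1/2-1/d}\nm{\na u_0}{W^{1,d}(\tau)}$. For the last piece, since~\eqref{eq:corr-reg} gives $\chi\in L^6(Y)$ when $d=3$ and $\chi$ is bounded when $d=2$ (via H\"older continuity), H\"older's inequality combined with~\eqref{eq:corr-est} produces $\eps\nm{\chi\xe\na^2 u_0}{L^2(\tau_\del)}\le C\eps|\tau|^{1/2-1/d}\nm{\na u_0}{W^{1,d}(\tau)}$.

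The middle piece is the main obstacle, because Lemma~\ref{lemma:Mulplier} requires a convex polyhedron whereas $\tau_\del$ is the non-convex shell $\tau\setminus(\text{smaller simplex})$. I would circumvent this by decomposing $\tau_\del=\bigcup_{i=0}^{d}S_i$, where $S_i:=\{x\in\tau:\text{dist}(x,H_i)\le\del\}$ and $H_i$ is the hyperplane carrying the face opposite vertex $v_i$. Each $S_i$, being the intersection of $\tau$ with a half-space, is a convex polyhedron of diameter at most $h_\tau$ and volume $|S_i|\le Ch_\tau^{d-1}\del$. Applying~\eqref{eq:mulpliera} on each $S_i$ and then using Lemma~\ref{lemma:retrace} with $p=d$ to bound $\nm{\na u_0}{L^d(S_i)}\le\nm{\na u_0}{L^d(\tau_\del)}\le C(\del/h_\tau)^{1/d}\nm{\na u_0}{W^{1,d}(\tau)}$, I would invoke the direct computations
\[
|S_i|^{1/2-1/d}(\del/h_\tau)^{1/d}\le C|\tau|^{1/2-1/d}(\del/h_\tau)^{1/2},\qquad |S_i|^{1/2-1/d}\le|\tau|^{1/2-1/d}
\]
(which follow from $|S_i|\sim h_\tau^{d-1}\del$ and $|\tau|\sim h_\tau^d$) to conclude, after summing over $i=0,\ldots,d$, that $\eps\nm{(\na\chi)\xe\na u_0}{L^2(\tau_\del)}\le C|\tau|^{1/2-1/d}(\sqrt{\del/h_\tau}+\eps)\nm{\na u_0}{W^{1,d}(\tau)}$.

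Combining the three bounds proves~\eqref{eq:u1trunc}. The proof of~\eqref{eq:u1truncb} is structurally identical but invokes~\eqref{eq:mulplierb} in place of~\eqref{eq:mulpliera} on each $S_i$, so every $L^d$ norm is replaced by the corresponding $L^2$ norm and the $W^{1,d}(\tau)$ norm by $H^1(\tau)$, with the factor $1+\nm{\chi}{L^\infty}$ entering the constant. The technical subtlety I anticipate is confirming that the constant in Lemma~\ref{lemma:Mulplier} applied on the slab $S_i$ remains uniform in the aspect ratio $\del/h_\tau$; I expect this to follow from the local, covering-based character of the underlying estimate~\eqref{eq:multipliera}.
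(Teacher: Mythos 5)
Your proposal is correct and follows essentially the same route as the paper: decompose $\tau_\delta$ into $d+1$ convex slabs (one per face), apply the local multiplier estimate~\eqref{eq:mulpliera} (resp.~\eqref{eq:mulplierb}) on each slab, use the trace-type inequality~\eqref{eq:trace1} with $p=d$ (resp.~$p=2$) to control $\nm{\na u_0}{L^d(\tau_\delta)}$, and handle the $\eps\chi\xe\na^2 u_0$ piece by H\"older with $1/q=1/2-1/d$ together with~\eqref{eq:corr-est}. The only cosmetic difference is that you use an overlapping cover while the paper states a disjoint decomposition, and you explicitly flag the uniformity of the constant in Lemma~\ref{lemma:Mulplier} over thin slabs as a subtlety that the paper leaves implicit.
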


\begin{proof}
Since $\tau$ is a simplex, we may decompose $\tau_\del$ into  $d+1$ disjoint convex domains $\{\tau_\del^i\}_{i=1}^{d+1}$. Over each $\tau_\del^i$, using the local multiplier estimate~\eqref{eq:mulpliera}, we obtain
\[
\eps\nm{\na\chi\xe\na u_0}{L^2(\tau_\del^i)}\le C\abs{\tau_\del^i}^{1/2-1/d}\Lr{\nm{\na u_0}{L^d(\tau_\del^i)}+\eps\nm{\na^2 u_0}{L^d(\tau_\del^i)}}.
\]
Summing up the above estimate for $i=1,\ldots,d+1$ and using the scaled trace inequality ~\eqref{eq:trace1} with $p=d$, we obtain
\begin{align*}
\eps\nm{\na\chi\xe\na u_0}{L^2(\tau_\del)}&\le C\abs{\tau_\del}^{1/2-1/d}\Lr{\nm{\na u_0}{L^d(\tau_\del)}+\eps\nm{\na^2 u_0}{L^d(\tau_\del)}}\\
&\le C \abs{\tau_\del}^{1/2-1/d}(\del/h_\tau)^{1/d}\nm{\na u_0}{W^{1,d}(\tau)}\\
&\qquad+C\eps\abs{\tau}^{1/2-1/d}\nm{\na^2 u_0}{L^d(\tau)}\\
&\le C(\eps+\sqrt{\del/h_\tau})\abs{\tau}^{1/2-1/d}\nm{\na u_0}{W^{1,d}(\tau)}. 
\end{align*}

Invoking the scaled trace inequality~\eqref{eq:trace1} with $p=2$ and using H\"older's inequality, we obtain
\[
\nm{\na u_0}{L^2(\tau_\del)}\le C\sqrt{\del/h_{\tau}}\nm{\na u_0}{H^1(\tau)}
\le C\sqrt{\del/h_{\tau}}\abs{\tau}^{1/2-1/d}\nm{\na u_0}{W^{1,d}(\tau)}.
\]

Using H\"older's inequality with $1/q=1/2-1/d$ and~\eqref{eq:corr-est} with $p=q$, we obtain
\begin{align*}
	\eps\nm{\chi\xe\na^2 u_0}{L^2(\tau_\del)}&\le\eps\nm{\chi\xe\na^2 u_0}{L^2(\tau)}
	\le\eps\nm{\chi\xe}{L^q(\tau)}\nm{\na^2 u_0}{L^d(\tau)}\\
	&\le C\eps\abs{\tau}^{1/2-1/d}\nm{\chi}{L^q(Y)}\nm{\na^2 u_0}{L^d(\tau)}.
\end{align*}

A combination of the above three inequalities leads to~\eqref{eq:u1trunc}.

If $\chi$ is bounded, then we sum up the local multiplier estimate~\eqref{eq:mulplierb} over $\tau_\del^i$ for $i=1,\ldots,d+1$ and obtain
\[
\eps\nm{\na\chi\xe\na u_0}{L^2(\tau_\del)}\le C(1+\nm{\chi}{L^\infty(Y)})\Lr{\nm{\na u_0}{L^2(\tau_\del)}+\eps\nm{\na^2 u_0}{L^2(\tau_\del)}}.
\]
Invoking the scaled trace inequality~\eqref{eq:trace1} again, we obtain
\begin{align*}
\nm{\na u_1^\eps}{L^2(\tau_\del)}&\le\nm{\na u_0}{L^2(\tau_\del)}+{ \eps}\nm{\na\chi\xe\na u_0}{L^2(\tau_\del)}
+\eps\nm{\chi\na^2 u_0}{L^2(\tau_\del)}\\
&\le C(1+\nm{\chi}{L^\infty(Y)})\Lr{\nm{\na u_0}{L^2(\tau_\del)}
+\eps\nm{\na^2 u_0}{L^2(\tau)}}\\
&\le C\Lr{\eps+\sqrt{\del/h_{\tau}}}(1+\nm{\chi}{L^\infty(Y)})\nm{\na^2u_0}{L^2(\tau)}.
\end{align*}
This gives~\eqref{eq:u1truncb} and finishes the proof.
\end{proof}
\subsubsection{Proof of Lemma~\ref{lema:app} and Lemma~\ref{lema:consis}}
\noindent
\vskip .3cm
{\em Proof for Lemma~\ref{lema:app}\;\;}Using the triangle inequality, we have
\begin{equation}\label{eq:dec}
\begin{aligned}
\nm{u^\eps-\wt{u}}{h}&\le\nm{u^\eps-u_1^{\eps}}{h}+\nm{\wt{u}-\wt{u}_1^\eps}{h}+\nm{u_1^{\eps}-\wt{u}_1^\eps}{h}\\
&=\nm{\na(u^\eps-u_1^\eps)}{L^2(\Om)}
+\nm{\wt{u}-\wt{u}_1^\eps}{h}+\nm{u_1^{\eps}-\wt{u}_1^\eps}{h}.
\end{aligned}
\end{equation}

Applying Lemma~\ref{lema:localhomo} to $\wt{u}$, using~\eqref{eq:h1local} and \textbf{Assumption A}, we obtain
\begin{align*}
\nm{\na(\wt{u}-\wt{u}_1^\eps)}{L^2(\tau)}&\le C\dfrac{\eps}{h_{\tau}}\nm{\na\wt{u}_0}{L^2(S)}=C\dfrac{\eps}{h_{\tau}}\abs{S}^{1/2}\abs{\na\wt{u}_0}\\
&=C\dfrac{\eps}{h_{\tau}}\abs{S}^{1/2}\abs{\na\pi u_0}=C\dfrac{\eps}{h_{\tau}}\dfrac{\abs{S}^{1/2}}{\abs{\tau}^{1/2}}\nm{\na\pi u_0}{L^2(\tau)}\\
&\le C\dfrac{\eps}{h_{\tau}}\nm{\na\pi u_0}{L^2(\tau)}.
\end{align*}

Summing up all $\tau\in\mc{T}_h$, using the shape-regular and inverse assumption of $\mc{T}_h$, we obtain 
\begin{align}\label{eq:1stele-err}
\nm{\wt{u}-\wt{u}_1^\eps}{h}&\le C\dfrac{\eps}{h}\nm{\na\pi u_0}{L^2(\Om)}\le C\dfrac{\eps}{h}\Lr{\nm{\na(u_0-\pi u_0)}{L^2(\Om)}
+\nm{\na u_0}{L^2(\Om)}}\nn\\
&\le C\Lr{\eps\nm{\na^2 u_0}{L^2(\Om)}+\dfrac{\eps}{h}\nm{\na u_0}{L^2(\Om)}}.
\end{align}

On each element $\tau,u_1^\eps-\wt{u}_1^{\eps}=u_0-\pi u_0+\eps\chi\xe\na(u_0-\pi u_0)$ and
\[
\na(u_1^\eps-\wt{u}_1^{\eps})=\na(u_0-\pi u_0)+\eps\na\chi\xe\na(u_0-\pi u_0)+\eps\chi\xe\na^2u_0.
\]
For $m=1,d=2,3$ or $m\ge 2,d=2$, $\chi$ is bounded by~\eqref{eq:maxbd}, using the local multiplier inequality~\eqref{eq:mulplierb}, we obtain
\begin{align*}
\eps\nm{\na\chi\xe\na(u_0-\pi u_0)}{L^2(\tau)}&\le C\Lr{\nm{\na(u_0-\pi u_0)}{L^2(\tau)}
+\eps\nm{\na^2u_0}{L^2(\tau)}}\\
&\le C(\eps+h_{\tau})\nm{\na^2 u_0}{L^2(\tau)}.
\end{align*}
It follows from the above two equations that
\begin{align*}
\nm{\na(u_1^\eps-\wt{u}_1^{\eps})}{L^2(\tau)}&\le\nm{\na(u_0-\pi u_0)}{L^2(\tau)}
+\eps\nm{\na\chi\xe\na(u_0-\pi u_0)}{L^2(\tau)}\\
&\quad+\eps\nm{\chi\xe\na^2 u_0}{L^2(\tau)}\\
&\le C\Lr{1+\nm{\chi}{L^\infty(Y)}}(\eps+h_{\tau})\nm{\na^2 u_0}{L^2(\tau)}.
\end{align*}
Summing up all $\tau\in\mc{T}_h$, and using~\eqref{eq:maxbd} again, we get
\begin{equation}\label{eq:1stcompare}
\nm{u_1^\eps-\wt{u}_1^{\eps}}{h}\le C(\eps+h)\nm{\na^2 u_0}{L^2(\Om)}.
\end{equation}

Substituting the above inequality,~\eqref{eq:1strateb} and~\eqref{eq:1stele-err} into~\eqref{eq:dec}, we obtain~\eqref{eq:appb}.

For $m\ge 2$ and $d=3$, by~\eqref{eq:corr-reg}, we have $\chi\in L^6(Y)$. Using the local multiplier estimate~\eqref{eq:mulpliera} and the standard interpolation estimate for $\pi u_0$, we obtain
\begin{align*}
\eps\nm{\na\chi\xe\na(u_0-\pi u_0)}{L^2(\tau)}&\le C\abs{\tau}^{1/6}
\Lr{\nm{\na(u_0-\pi u_0)}{L^3(\tau)}
+\eps\nm{\na^2u_0}{L^3(\tau)}}\\
&\le C(\eps+h_{\tau})\abs{\tau}^{1/6}\nm{\na^2 u_0}{L^3(\tau)}.
\end{align*}
Using H\"older's inequality, the inequality~\eqref{eq:corr-est} with $p=6,D=\tau$ and~\eqref{eq:corr-reg}, we obtain
\[
\eps\nm{\chi\xe\na^2 u_0}{L^2(\tau)}\le\eps\nm{\chi\xe}{L^6(\tau)}\nm{\na^2 u_0}{L^3(\tau)}\le C\eps\abs{\tau}^{1/6}\nm{\na^2 u_0}{L^3(\tau)}.
\]
Proceeding along the same line that leads to~\eqref{eq:1stcompare}, we obtain
\[
\nm{\na(u_1^\eps-\wt{u}_1^{\eps})}{L^2(\tau)}\le C(\eps+h_{\tau})\abs{\tau}^{1/6}\nm{\na^2u_0}{L^3(\tau)}.
\]
Summing up all $\tau\in\mc{T}_h$ and using H\"older's inequality, we get
\[
\nm{u_1^\eps-\wt{u}_1^{\eps}}{h}\le C(\eps+h)\nm{\na^2 u_0}{L^3(\Om)}.
\]

Substituting the above inequality,~\eqref{eq:1stratea} and~\eqref{eq:1stele-err} into~\eqref{eq:dec}, we obtain~\eqref{eq:appa}.
\qed
\vskip .3cm\noindent{\em Proof for Lemma~\ref{lema:consis}\;}
For $w\in V_h^0$, over each oversampling domain $S$, let $w_0$ be its homogenized part over $S$. 
By $w_0\in H_0^1(\Om;\R^m)$, there holds
\[
a_h(u^\eps,w_0)=\dual{f}{w_0}.
\]
Therefore, we write the consistency error functional as
\begin{align*}
\dual{f}{w}-a_h(u^\eps,w)&=\dual{f}{w-w_0}-a_h(u^\eps,w-w_0)\\
&=\dual{f}{w-w_0}-a_h(u^\eps,w-w_1^\eps)-a_h(u^\eps,w_1^\eps-w_0).
\end{align*}

Using Lemma~\ref{lema:localhomo},~\eqref{eq:l2local},~\eqref{eq:mslinear} and \textbf{Assumption} A, we obtain
\begin{align*}
\nm{w-w_0}{L^2(\tau)}&\le\nm{w-w_0}{L^2(S)}\le C\eps\nm{\na w_0}{L^2(S)}\\
&\le C\eps\nm{\na w_0}{L^2(\tau)}\le C\eps\nm{\na w}{L^2(\tau)},
\end{align*}
which immediately implies
\begin{equation}\label{eq:sourceconsis}
\abs{\dual{f}{w-w_0}}\le C\eps\nm{f}{L^2(\Om)}\nm{w}{h}.
\end{equation}

Using~\eqref{eq:h1local},~\eqref{eq:mslinear} again, and the inverse assumption of $\mc{T}_h$, we obtain
\begin{align*}
\abs{a_h(u^\eps,w-w_1^\eps)}&\le\Lam\sum_{\tau\in\mc{T}_h}\nm{\na u^\eps}{L^2(\tau)}\nm{\na(w-w_1^\eps)}{L^2(\tau)}\\
&\le C\sum_{\tau\in\mc{T}_h}\dfrac{\eps}{h_{\tau}}\nm{\na u^\eps}{L^2(\tau)}\nm{\na w_0}{L^2(\tau)}\\
&\le C\dfrac{\eps}{h}\sum_{\tau\in\mc{T}_h}\nm{\na u^\eps}{L^2(\tau)}\nm{\na w}{L^2(\tau)}\\
&\le C\dfrac{\eps}{h}\nm{\na u^\eps}{L^2(\Om)}\nm{w}{h}.
\end{align*}
Combining the above two estimates, we obtain
\begin{equation}\label{eq:consis1}
\abs{\dual{f}{w-w_0}-a_h(u^\eps,w-w_1^\eps)}\le C\Lr{\eps+\eps/h}\nm{f}{L^2(\Om)}\nm{w}{h},
\end{equation}
where we have used the a-priori estimate $\nm{\na u^\eps}{L^2(\Om)}\le C\nm{f}{L^2(\Om)}$.

It remains to bound $a_h(u^\eps,w_1^\eps-w_0)$. On each element $\tau$, we  introduce a cut-off function $\rho_\eps\in C_0^\infty(\tau)$ such that $0\le\rho_\eps\le 1$ and $\abs{\na\rho_\eps}\le C/\eps$, moreover,
\[
\rho_\eps=\left\{\begin{aligned}
1\quad&\text{dist}(x,\pa\tau)\ge 2\eps,\\
0\quad&\text{dist}(x,\pa\tau)\le\eps.
\end{aligned}\right.
\]
Denote $\wh{w}^\eps=(w_1^\eps-w_0)(1-\rho_\eps)$, which is the oscillatory part of $w_1^\eps$ 
supported inside the strip $\tau_{2\eps}$. We write
\begin{align*}
a_\tau(u^\eps,w_1^\eps-w_0)=&a_\tau(u^\eps,(w_1^\eps-w_0)\rho_\eps)
+a_\tau(u^\eps,\wh{w}^\eps)\\
=&\dual{f}{(w_1^\eps-w_0)\rho_\eps}_{\tau}+a_\tau(u^\eps,\wh{w}^\eps).
\end{align*}
Using~\eqref{eq:corr-est} with $p=2$, we obtain
\begin{equation}\label{eq:consis21}
\begin{aligned}
\abs{\dual{f}{(w_1^\eps-w_0)\rho_\eps}_{\tau}}&\le\eps\nm{f}{L^2(\tau)}\nm{\chi\xe}{L^2(\tau)}\abs{\na w_0}\\
&\le C\eps\abs{\tau}^{1/2}\nm{f}{L^2(\tau)}\nm{\chi}{L^2(Y)}\abs{\na w_0}\\
&= C\eps\nm{f}{L^2(\tau)}\nm{\chi}{L^2(Y)}\nm{\na w_0}{L^2(\tau)}.\\
\end{aligned}
\end{equation}

A direct calculation gives\footnote{We may also refer to~\cite[Lemma 3.1]{DuMing:2010} for a proof of~\eqref{eq:cutoffest1}.}
\begin{equation}\label{eq:cutoffest1}
\nm{\na\wh{w}^\eps}{L^2(\tau_{2\eps})}\le C\sqrt{\eps/h_{\tau}}\nm{\na w_0}{L^2(\tau)},
\end{equation}
which together with the local estimate~\eqref{eq:u1trunc} implies that, for $m\geq2$ and $d=3$, there holds
\[
\begin{aligned}
&\abs{a_\tau(u^\eps,\wh{w}^\eps)}
\le\abs{a_\tau(u_1^\eps,\wh{w}^\eps)}
+\abs{a_\tau(u^\eps-u_1^\eps,\wh{w}^\eps)}\\
&\le C\Lr{\Lr{\eps+\dfrac{\eps}{h_{\tau}}}\abs{\tau}^{1/6}\nm{\na u_0}{W^{1,3}(\tau)}
+\sqrt{\dfrac{\eps}{h_{\tau}}}\nm{\na (u^\eps-u_1^\eps)}{L^2(\tau)}}\nm{\na w_0}{L^2(\tau)}.
\end{aligned}
\]
This estimate together with~\eqref{eq:consis21} implies
\begin{align*}
\abs{a_{\tau}(u^\eps,w_1^\eps-w_0)}&\le C\Bigl(\Lr{\eps+\dfrac{\eps}{h_{\tau}}}\abs{\tau}^{1/6}\nm{\na u_0}{W^{1,3}(\tau)}
+\sqrt{\dfrac{\eps}{h_{\tau}}}\nm{\na (u^\eps-u_1^\eps)}{L^2(\tau)}\\
&\qquad\quad+\eps\nm{f}{L^2(\tau)}\Bigr)\nm{\na w_0}{L^2(\tau)}.
\end{align*}
Summing up the above estimates for all $\tau\in\mc{T}_h$, using~\eqref{eq:mslinear},~\eqref{eq:1strateb}, the inverse assumption of $\mc{T}_h$ and H\"older's inequality, we obtain
\begin{align*}
\abs{a_h(u^\eps,w_1^\eps-w_0)}&\le C\Bigl(\Lr{\eps+\dfrac{\eps}{h}}\nm{\na u_0}{W^{1,3}(\Om)}
+\sqrt{\dfrac{\eps}h}\nm{\na(u^\eps-u^\eps_1)}{L^2(\Om)}\\
&\qquad\quad+\eps\nm{f}{L^2(\Om)}\Bigr)\nm{w}{h}\\
&\le C\Lr{\eps+\dfrac{\eps}{h}}\Lr{\nm{\na u_0}{W^{1,3}(\Om)}+\nm{f}{L^2(\Om)}}\nm{w}{h}.
\end{align*}%
This inequality together with~\eqref{eq:consis1} implies \eqref{eq:consis}.

For $m=1,d=2,3$ or $m\ge 2,d=2$, $\chi$ is bounded. Replacing~\eqref{eq:u1trunc} by~\eqref{eq:u1truncb} and proceeding along the same line that leads to \eqref{eq:consis}, we obtain~\eqref{eq:consisb}.
\qed
\subsection{L$^{d/(d-1)}$ error estimate}
We exploit the Aubin-Nitsche trick to obtain the error estimate of MsFEM in $L^{d/(d-1)}-$norm with $d=2,3$.
\begin{theorem}\label{thm:l2err}
Under the same assumption of Theorem~\ref{thm:main}, and suppose that $\varphi\in H_0^1(\Om;\mb{R}^m)$ satisfying
\[
\int_{\Om}\na\varphi\cdot\wh{A}\na\psi\dx=\dual{F}{\psi}\qquad\text{for all\quad}\psi\in H_0^1(\Om;\mb{R}^m).
\]
For $m=1,d=2,3$ or $m\ge 2,d=2$, if the shift estimate
	\begin{equation}\label{eq:regaux1}
		\nm{\varphi}{H^2(\Om)}\le C\nm{F}{L^2(\Om)}
	\end{equation}
holds true, then for $m=1,d=2,3$, there holds
	\begin{equation}\label{eq:l2errold}
		\nm{u-u_h}{L^2(\Om)}\le C(\eps+h^2+\eps/h)\Lr{\nm{\na u_0}{H^1(\Om)}+\nm{f}{L^2(\Om)}}.
	\end{equation}
For $m\ge 2,d=2$, there holds
\begin{equation}\label{eq:l2err1}
		\nm{u-u_h}{L^2(\Om)}\le C(\eps+h^2+\eps/h)\nm{f}{L^2(\Om)}.
\end{equation}
	
For $m\ge 2$ and $d=3$, if the shift estimate
\begin{equation}\label{eq:regaux}
		\nm{\varphi}{W^{2,3}(\Om)}\le C\nm{F}{L^3(\Om)}
\end{equation}
holds true, then  
\begin{equation}\label{eq:l2err}
		\nm{u-u_h}{L^{3/2}(\Om)}\le C(\eps+h^2+\eps/h)\nm{f}{L^3(\Om)}.
	\end{equation}
\end{theorem}

Except the resonance error $\eps/h$, the other two items in the above error estimates are {\em optimal}. For scalar elliptic equation and elliptic systems in two dimension, we obtain the L$^2$ error estimate.
\begin{proof}
For any $g\in L^2(\Om;\mb{R}^m)$, we find $v^\eps\in H_0^1(\Om;\mb{R}^m)$ such that
\begin{equation}\label{eq:aux}
\int_\Om\na w\cdot(A\xe)^t\na v^\eps\dx=\int_{\Om}g\cdot w\dx\qquad\text{for all\quad}w\in H_0^1(\Om;\mb{R}^m).
\end{equation}
Let $v_h$ be the MsFEM approximation of $v^\eps$ defined by
\begin{equation}\label{eq:auxmsfem}
a_h(w,v_h)=\int_{\Om}g\cdot w\dx\qquad\text{for all\quad} w\in V_h^0.
\end{equation}
	
It follows from~\eqref{eq:aux} and ~\eqref{eq:auxmsfem} that
\[
\begin{aligned}
\int_{\Om}g\cdot(u^\eps-u_h)\dx&=a(u^\eps,v^\eps)-a_h(u_h,v_h)\\
&=a_h(u^\eps-u_h,v^\eps-v_h)+a_h(u^\eps-u_h,v_h)+a_h(u_h,v^\eps-v_h)\\
&=a_h(u^\eps-u_h,v^\eps-v_h)\\
&\quad+\bigl[a_h(u^\eps,v_h)-\dual{f}{v_h}+a_h(u_h,v^\eps)-\dual{g}{u_h}\bigr].
\end{aligned}
\]
For $m=1,d=2,3$ or $m\ge 2,d=2$, using the energy error estimate~\eqref{eq:H1errb} and the regularity assumption~\eqref{eq:regaux1}, we obtain
\begin{align*}
\abs{a_h(u^\eps-u_h,v^\eps-v_h)}&\le\Lam\nm{u^\eps-u_h}{h}\nm{v^\eps-v_h}{h}\\
&\le C(\eps+h^2+\eps^2/h^2)\Lr{\nm{\na u_0}{H^1(\Om)}+\nm{f}{L^2(\Om)}}\nm{g}{L^2(\Om)}.
\end{align*}
	
Using~\eqref{eq:consisb} and ~\eqref{eq:regaux1}, we bound the consistency error functional as
\begin{align*}
&\quad\abs{a_h(u^\eps,v_h)-\dual{f}{v_h}
	+a_h(u_h,v^\eps)-\dual{g}{u_h}}\\
	&\le C(\eps+\eps/h)\Lr{\nm{\na u_0}{H^1(\Om)}+\nm{f}{L^2(\Om)}}\nm{g}{L^2(\Om)}.
\end{align*}
A combination of the above three estimates yields~\eqref{eq:l2errold}. 

For $m\ge 2, d=2$, noting that $A=A^t$ and the shift estimate~\eqref{eq:regaux1} is also valid for $u_0$, this gives~\eqref{eq:l2err1}.
	
For $m\geq 2$ and $d=3$,  $\chi$ is unbounded. Replacing~\eqref{eq:regaux1},~\eqref{eq:H1errb} and~\eqref{eq:consisb} by~\eqref{eq:regaux}, ~\eqref{eq:H1err} and~\eqref{eq:consis}, respectively, and proceeding along the same line that leads to~\eqref{eq:l2errold}, we obtain
\[
\nm{u-u_h}{L^{3/2}(\Om)}\le C(\eps+h^2+\eps/h)\Lr{\nm{\na u_0}{W^{1,3}(\Om)}+\nm{f}{L^3(\Om)}}.
\]
Noting that $A^t=A$ and the shift estimate~\eqref{eq:regaux} is also valid for $u_0$, this gives~\eqref{eq:l2err}.
\end{proof}
\subsection{Error estimates for MsFEM without oversampling}
We visit the error estimates of MsFEM without oversampling~\cite{HouWu:1997}. The multiscale basis function is $\phi^{\beta}=\{\phi_i^{\beta}\}_{i=1}^{d+1}$ is constructed as~\eqref{eq:overpro} with $S(\tau)$ replaced by $\tau$. 
\[
V_h:=\text{Span}\{\phi_i \quad\text{for all nodes\;} x_i \text{\;of\;}\mc{T}_h\},
\]
and 
\(
V_h^0{:}=\set{v\in V_h}{v=0\quad \text{on~} \pa\Om}.
\)
The approximation problem reads as: Find $u_h\in V_h^0$ such that
\begin{equation}\label{eq:msfem}
a(u_h,v)=\dual{f}{v}\qquad\text{for all\quad}v\in V_h^0.
\end{equation}

Under the same assumptions of Theorem~\ref{thm:main} except that $A$ is not necessarily symmetric when $m\ge 2$, we prove the energy error estimate for MsFEM without oversampling. 
\begin{theorem}\label{thm:mainorig}
Assume $A$ is $1-$periodic and satisfies the Legendre-Hadamard condition~\eqref{eq:ellp}. Let $\Om$ be a bounded Lipschitz domain in $\mb{R}^d$. Let $u^{\eps}$ and $u_h$ be the solutions of~\eqref{eq:bvpweak} and ~\eqref{eq:msfem}, respectively.
	
For $m=1, d=2,3$ or $m\ge 2,d=2$, if $u_0\in H^2(\Om;\mb{R}^m)$, then		
\begin{equation}\label{eq:H1errb-ms}
\nm{\na(u^\eps-u_h)}{L^2(\Omega)}\le C\Lr{(\sqrt\eps+h)\nm{\na u_0}{H^1(\Om)}+\sqrt{\eps/h}\nm{\na u_0}{L^2(\Om)}},
\end{equation}
where $C$ depends on $\lam,\Lam,\Om$ and the mesh parameters $\sigma_0$ and $\sigma_1$.
	
For $m\geq2$ and $d=3$, if $u_0\in W^{2,3}(\Om;\mb{R}^m)$, then
\begin{equation}\label{eq:H1err-ms}
\nm{\na(u^\eps-u_h)}{L^2(\Omega)}\le C\Lr{(\sqrt\eps+h)\nm{\na u_0}{W^{1,3}(\Om)}+\sqrt{\eps/h}\nm{\na u_0}{L^2(\Om)}},
\end{equation}
where $C$ depends on $\lam,\Lam, \Om$ and the mesh parameters $\sigma_0$ and $\sigma_1$.
\end{theorem}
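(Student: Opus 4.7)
The plan is to exploit the fact that MsFEM without oversampling is a \emph{conforming} method. Indeed, when $S$ is replaced by $\tau$ in~\eqref{eq:overpro}, the defining condition $\psi_i^\beta - Q_i^\beta \in H_0^1(\tau;\mb{R}^m)$ forces $\psi_i^\beta = \lam_i e^\beta$ on $\pa\tau$, which gives $c^\beta = I$ and hence $\phi_i^\beta = \psi_i^\beta$. Since barycentric coordinates match across shared faces of adjacent simplices, the basis functions are continuous globally and $V_h^0\subset H_0^1(\Om;\R^m)$. C\'ea's lemma therefore reduces the energy estimate to a best-approximation estimate:
\[
\nm{\na(u^\eps-u_h)}{L^2(\Om)}\le C\inf_{v\in V_h^0}\nm{\na(u^\eps-v)}{L^2(\Om)},
\]
which, unlike the oversampling case, has \emph{no} consistency error (explaining the absence of an $\nm{f}{L^2}$ term on the right-hand side of~\eqref{eq:H1errb-ms}--\eqref{eq:H1err-ms}).

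Mimicking Lemma~\ref{lema:app}, I choose $v=\wt{u}$ the MsFEM interpolant from~\eqref{eq:mlinterpolant} and split
\[
\nm{\na(u^\eps-\wt{u})}{L^2(\Om)}\le\nm{\na(u^\eps-u_1^\eps)}{L^2(\Om)}+\nm{\na(u_1^\eps-\wt{u}_1^\eps)}{L^2(\Om)}+\nm{\na(\wt{u}_1^\eps-\wt{u})}{L^2(\Om)}.
\]
The first term is bounded by $C\sqrt\eps\nm{\na u_0}{H^1(\Om)}$ (for $m=1$ or $d=2$) or $C\sqrt\eps\nm{\na u_0}{W^{1,3}(\Om)}$ (for $m\ge 2$, $d=3$) using Theorem~\ref{thm:rate}. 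The second term is estimated exactly as in the proof of~\eqref{eq:1stcompare}, via the local multiplier inequalities of Lemma~\ref{lemma:Mulplier}, since that argument is purely elementwise and never invokes the oversampling domain. This produces a contribution of order $(\eps+h)\nm{\na^2 u_0}{L^2(\Om)}$ (or $L^3$ in the system-and-3D case).

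The main obstacle, and the source of the characteristic resonance $\sqrt{\eps/h}$, is the third term. Here the essential observation is that on each $\tau$, $\pi u_0$ is affine, hence $\na\pi u_0$ is a constant vector and a direct verification using the cell problem~\eqref{eq:corrector} gives $\mc{L}_\eps(\wt{u}_1^\eps)=0$ in $\tau$. Since $\wt{u}$ is also $\mc{L}_\eps$-harmonic in $\tau$ by construction, the difference $e:=\wt{u}_1^\eps-\wt{u}$ solves
\[
\mc{L}_\eps(e)=0\quad\text{in\;}\tau,\qquad e=\eps\chi\xe\na\pi u_0\quad\text{on\;}\pa\tau.
\]
I bound $\nm{\na e}{L^2(\tau)}$ by an energy-lifting argument: let $\rho_\eps\in C_0^\infty(\tau)$ be a cut-off equal to $1$ on the layer $\tau_\eps$, supported in $\tau_{2\eps}$, with $\abs{\na\rho_\eps}\le C/\eps$. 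Then $\wh{e}:=\eps\chi\xe\na\pi u_0\cdot\rho_\eps$ satisfies $e-\wh{e}\in H_0^1(\tau;\R^m)$, and the Lax-Milgram energy estimate gives $\nm{\na e}{L^2(\tau)}\le C\nm{\na\wh{e}}{L^2(\tau)}$.

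Because $\na\pi u_0$ is constant, the $\na^2\pi u_0$ contribution in $\na\wh{e}$ drops, leaving
\[
\nm{\na\wh{e}}{L^2(\tau)}\le C\Lr{\eps\nm{\na\chi\xe\na\pi u_0}{L^2(\tau_{2\eps})}+\nm{\chi\xe\na\pi u_0}{L^2(\tau_{2\eps})}}.
\]
Both terms reduce to $|\na\pi u_0|$ multiplied by an $L^2$-norm of a periodic function evaluated at $x/\eps$ over the tubular layer; by the standard periodic scaling argument and~\eqref{eq:corr-est}, each is bounded by $C|\tau_{2\eps}|^{1/2}\sim C\sqrt{\eps h_\tau^{d-1}}\,|\na\pi u_0|$. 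Since $|\na\pi u_0|=h_\tau^{-d/2}\nm{\na\pi u_0}{L^2(\tau)}$, this yields $\nm{\na e}{L^2(\tau)}\le C\sqrt{\eps/h_\tau}\nm{\na\pi u_0}{L^2(\tau)}$. Summing over $\tau\in\mc{T}_h$, using standard interpolation $\nm{\na(\pi u_0-u_0)}{L^2(\Om)}\le Ch\nm{\na^2 u_0}{L^2(\Om)}$ and absorbing $\sqrt{\eps/h}\cdot h\le h$ into the $(\sqrt\eps+h)$ factor, I obtain
\[
\nm{\na(\wt{u}_1^\eps-\wt{u})}{L^2(\Om)}\le C\sqrt{\eps/h}\nm{\na u_0}{L^2(\Om)}+Ch\nm{\na^2 u_0}{L^2(\Om)}.
\]
Because this boundary-layer estimate invokes only~\eqref{eq:corr-est} and periodic scaling (never Theorem~\ref{thm:scale-invariant}), the symmetry assumption on $A$ is \emph{not} required when $m\ge 2$, as claimed. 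Combining the three bounds yields~\eqref{eq:H1errb-ms} and~\eqref{eq:H1err-ms}. The hard part is clearly the boundary-layer analysis of the third term: isolating the cancellation $\mc{L}_\eps(\wt{u}_1^\eps)=0$ (which relies on the affineness of $\pi u_0$) is what makes the energy lifting possible and produces the sharp $\sqrt{\eps/h}$ rate.
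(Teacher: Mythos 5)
Your proof is correct and follows the same overall structure as the paper: C\'ea's lemma via conformity, plus the three-term decomposition of $\nm{\na(u^\eps-\wt{u})}{L^2(\Om)}$ with the first term handled by Theorem~\ref{thm:rate} and the second term by the element-wise argument of Lemma~\ref{lema:app}. The one genuine difference is your treatment of the boundary-layer term $\nm{\na(\wt{u}-\wt{u}_1^\eps)}{L^2(\tau)}$. The paper applies~\eqref{eq:1strateb} (resp.~\eqref{eq:1stratea}) rescaled to $\tau$, producing $\sqrt{\eps/h_\tau}$ through the substitution $\eps\mapsto\eps/h_\tau$; you instead make the cancellation explicit by noting the corrector identity $\mc{L}_\eps\wt{u}_1^\eps=0$ on $\tau$ (a consequence of $\na\pi u_0$ being constant together with the cell problem~\eqref{eq:corrector}), so that $e=\wt{u}_1^\eps-\wt{u}$ is $\mc{L}_\eps$-harmonic with boundary trace $\eps\chi\xe\na\pi u_0$, and then lift the trace through a cut-off localized in the $O(\eps)$ boundary strip. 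This is, in effect, a hands-on reproof of the rescaled convergence rate in the special case of affine homogenized data, where the interior residual vanishes identically and only the boundary layer survives. It is more self-contained than the paper's black-box citation and makes transparent why the symmetry assumption on $A$ drops out (you never invoke Theorem~\ref{thm:scale-invariant}), which is precisely what the paper exploits as well, but does not spell out. Two small points worth tightening: (i) your $\rho_\eps$ cannot be in $C_0^\infty(\tau)$ if it equals $1$ on the boundary layer; you mean $1-\rho_\eps$ for the standard interior cut-off, as in the paper's consistency-error proof. (ii) Under Legendre--Hadamard ellipticity, coercivity is only available on $H_0^1(\tau)$; the energy bound must be derived for $e-\wh{e}\in H_0^1(\tau)$ and then combined with the triangle inequality, yielding $\nm{\na e}{L^2(\tau)}\le(1+\Lam/\lam)\nm{\na\wh{e}}{L^2(\tau)}$. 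Your phrase ``Lax-Milgram energy estimate'' suggests you have this in mind, but the intermediate step should be stated for systems.
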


As a direct consequence of the above theorem, we obtain the L$^{d/(d-1)}$ error estimate for MsFEM without oversampling. The proof follows the same line that leads to Theorem~\ref{thm:l2err}, we omit the proof.
\begin{coro}
Under the same assumption of Theorem~\ref{thm:l2err} except that $A$ is not necessarily symmetric for $m\ge 2$. Let $u^{\eps}$ and $u_h$ be the solutions of~\eqref{eq:bvpweak} and ~\eqref{eq:msfem}, respectively. For $m=1,d=2,3$ or $m\ge 2,d=2$, there holds
\[
\nm{u-u_h}{L^2(\Om)}\le C(\eps+h^2+\eps/h)\nm{\na u_0}{H^1(\Om)}.
\]
	
For $m\ge 2$ and $d=3$, there holds
\[
\nm{u-u_h}{L^{3/2}(\Om)}\le C(\eps+h^2+\eps/h)\nm{\na u_0}{W^{1,3}(\Om)}.
\]
\end{coro}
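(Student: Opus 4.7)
The plan is to run the Aubin--Nitsche duality argument exactly as in the proof of Theorem~\ref{thm:l2err}, exploiting one structural simplification: because the cell problems for MsFEM \emph{without} oversampling are solved on each $\tau$ with piecewise linear Lagrange boundary data, the basis functions are continuous across element interfaces. Hence $V_h^0\subset H_0^1(\Om;\R^m)$, $a_h=a$, and the method is genuinely conforming. Galerkin orthogonality $a(u^\eps-u_h,v_h)=0$ for all $v_h\in V_h^0$ therefore holds, which eliminates the consistency error functional that had to be bounded separately in the oversampling case.

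Concretely, let $g\in L^{p'}(\Om;\R^m)$, where $p'=2$ when $m=1$ or $d=2$ and $p'=3$ when $m\ge 2$ and $d=3$. I would introduce the dual solution $v^\eps\in H_0^1(\Om;\R^m)$ via $a(w,v^\eps)=\dual{g}{w}$ for all $w\in H_0^1(\Om;\R^m)$, together with its conforming MsFEM approximation $v_h\in V_h^0$ defined by $a(w,v_h)=\dual{g}{w}$ for all $w\in V_h^0$; the transposed coefficient $(A\xe)^t$ is again $1$-periodic and satisfies~\eqref{eq:ellp} with the same constants. Testing the primal equation against $v^\eps\in H_0^1$ and invoking Galerkin orthogonality gives
\[
\dual{g}{u^\eps-u_h}=a(u^\eps-u_h,v^\eps-v_h),
\]
so continuity of $a$ yields
\[
\abs{\dual{g}{u^\eps-u_h}}\le\Lam\nm{\na(u^\eps-u_h)}{L^2(\Om)}\nm{\na(v^\eps-v_h)}{L^2(\Om)}.
\]
I would then apply Theorem~\ref{thm:mainorig} to each factor; the dual energy error is controlled by $C(\sqrt\eps+h+\sqrt{\eps/h})\nm{\na v_0}{H^1(\Om)}$ (resp.\ $C(\sqrt\eps+h+\sqrt{\eps/h})\nm{\na v_0}{W^{1,3}(\Om)}$), and the shift estimate~\eqref{eq:regaux1} (resp.~\eqref{eq:regaux}), applied to the dual homogenized operator whose coefficient $\wh{A}{}^t$ inherits the same ellipticity as $\wh{A}$, controls this by $C(\sqrt\eps+h+\sqrt{\eps/h})\nm{g}{L^{p'}(\Om)}$. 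Taking the supremum over $g$ of unit norm recovers the stated $L^2$ (resp.\ $L^{3/2}$) norm of $u^\eps-u_h$.

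The only remaining step will be algebraic: expand
\[
\Lr{\sqrt\eps+h+\sqrt{\eps/h}}^2=\eps+h^2+\eps/h+2\sqrt\eps\,h+2\eps/\sqrt h+2\sqrt{\eps h},
\]
and absorb the three cross terms via AM--GM, namely $\sqrt\eps\,h\le\tfrac12(\eps+h^2)$, $\eps/\sqrt h=\sqrt{\eps\cdot(\eps/h)}\le\tfrac12(\eps+\eps/h)$, and $\sqrt{\eps h}=\sqrt{(\eps/h)\cdot h^2}\le\tfrac12(\eps/h+h^2)$, yielding the target rate $\eps+h^2+\eps/h$. I do not expect a serious obstacle, since the argument is a specialization of the duality proof of Theorem~\ref{thm:l2err}; the nicest structural observation is that the square of the suboptimal factor $\sqrt{\eps/h}$ appearing in Theorem~\ref{thm:mainorig} is exactly $\eps/h$, so the duality automatically produces precisely the resonance rate stated in the corollary.
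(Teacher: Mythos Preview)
Your approach is exactly what the paper intends: the proof there is omitted with the remark that it ``follows the same line that leads to Theorem~\ref{thm:l2err}.'' You have correctly supplied that line, noting that conformity ($V_h^0\subset H_0^1(\Om;\R^m)$) eliminates both consistency terms so that only the product $a(u^\eps-u_h,v^\eps-v_h)$ survives, and your AM--GM reduction of $(\sqrt\eps+h+\sqrt{\eps/h})^2$ to $C(\eps+h^2+\eps/h)$ is correct.
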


The proof of Theorem~\ref{thm:mainorig} relies on Theorem~\ref{thm:rate} and Lemma~\ref{lemma:Mulplier}. We only sketch the main steps because the details are the same with the line leading to Theorem~\ref{thm:main}.
\vskip .3cm
\noindent
{\em Proof of Theorem~\ref{thm:mainorig}\;}
Noting that MsFEM without oversampling is conforming, i.e., $V_h^0\subset H_0^1(\Om;\mb{R}^m)$, we obtain 
\begin{equation}\label{eq:cea}
\nm{\na(u^\eps-u_h)}{L^2(\Omega)}\le(1+\Lam/\lam)\inf_{v\in V_h^0}\nm{\na(u^\eps-v)}{L^2(\Omega)}.
\end{equation}
	
Define MsFEM interpolant $\wt{u}(x)$ as~\eqref{eq:mlinterpolant}. Using the triangle inequality, we obtain
\[
\nm{\na(u^\eps-\wt{u})}{L^2(\Om)}\le\nm{\na(u^\eps-u_1^\eps)}{L^2(\Om)}+\nm{\na(\wt{u}-\wt{u}_1^\eps)}{L^2(\Om)}+\nm{\na(u_1^{\eps}-\wt{u}_1^\eps)}{L^2(\Om)}.
\]

The estimate of $\nm{\na(u^\eps-u_1^\eps)}{L^2(\Om)}$ follows from Theorem~\ref{thm:rate}, and the estimate of $\nm{\na(u_1^{\eps}-\wt{u}_1^\eps)}{L^2(\Om)}$ is the same with the corresponding term in Lemma~\ref{lema:app}. Note that $\wt{u}_1^\eps$ is the first order approximation of $\wt{u}$ over $\tau$. For $m=1,d=2,3$ or $m\ge 2,d=2$, using~\eqref{eq:1strateb}, we get
\begin{align*}
\nm{\na(\wt{u}-\wt{u}_1^\eps)}{L^2(\tau)}&\le C\sqrt{\eps/h_\tau}\nm{\na\pi u_0}{L^2(\tau)}\\
&\le C\Lr{\sqrt{\eps/h_\tau}\nm{\na u_0}{L^2(\tau)}+\sqrt{\eps h_{\tau}}\nm{\na u_0}{H^1(\tau)}}.
\end{align*}
Summing up the above estimate for all $\tau\in\mc{T}_h$, and using the inverse assumption of $\mc{T}_h$, we obtain 
\begin{equation}\label{eq:inter-new}
\nm{\na(\wt{u}-\wt{u}_1^\eps)}{L^2(\Om)}\le C\Lr{\sqrt{\eps/h}\nm{\na u_0}{L^2(\Om)}
+\sqrt{\eps h}\nm{\na u_0}{H^1(\Om)}}.
\end{equation}

For $m\ge 2$ and $d=3$, using~\eqref{eq:1stratea} and the fact that $\na\pi u_0$ is a piecewise constant matrix over $\tau$, we get
\[
\nm{\na(\wt{u}-\wt{u}_1^\eps)}{L^2(\tau)}\le C\sqrt{\eps/h_\tau}\abs{\tau}^{1/6}\nm{\na\pi u_0}{L^3(\tau)}
=C\sqrt{\eps/h_\tau}\nm{\na\pi u_0}{L^2(\tau)}.
\]
Proceeding along the same line that leads to~\eqref{eq:inter-new}, we obtain
\[
\nm{\na(\wt{u}-\wt{u}_1^\eps)}{L^2(\Om)}\le C\Lr{\sqrt{\eps/h}\nm{\na u_0}{L^2(\Om)}
+\sqrt{\eps h}\nm{\na u_0}{H^1(\Om)}}.
\]

A combination of all the above estimates completes the proof.\qed
\begin{remark}
We have used Theorem~\ref{thm:rate} to bound $\nm{\na(\wt{u}-\wt{u}_1^\eps)}{L^2(\tau)}$ instead of Lemma~\ref{lema:localhomo}, we need not assume the symmetry of $A$ when $m\geq2$.
\end{remark}
\section{Conclusion}
Under suitable regularity assumptions on the homogenized solution, we proved the optimal energy error estimates for MsFEM with or without oversampling applying to elliptic systems with bounded measurable periodic coefficients. The present work may be extended to elliptic system with locally periodic coefficients, i.e., $A^\eps=A(x,x/\eps)$ with the aid of a new local multiplier estimate. The extension to elliptic system for the coefficients with stratified structure is also very interesting. We believe that the machineries developed in the present work may be useful to analyze other MsFEM such as the mixed MsFEM~\cite{ChenHou:2003}, Crouzeix-Raviart MsFEM~\cite{LeBrisLegoll:2014}, or MsFEM with different oversampling techniques~\cite{EfendievHouWu:2000}. We shall leave these for further pursuit.
\section*{Acknowledgments}
We gratefully acknowledge the helpful suggestions made by the anonymous referees, which greatly improved the presentation of the paper.
\bibliographystyle{siamplain}
\bibliography{msfem}
\end{document}